\documentclass[12pt]{amsart}
\usepackage[]{amsmath, amsthm, amsfonts, verbatim, amssymb, tikz,inputenc}

\usepackage{amsmath}
\usepackage{amsthm}
\usepackage{amssymb}
\usepackage{amsfonts}
\usepackage{hyperref}
\usepackage{rotating}
\usepackage{amssymb}
\usepackage{epstopdf}
\usepackage{pifont}
\usepackage{amsmath}
\usepackage{enumerate}
\usepackage{graphicx}
\usepackage{color}
\usepackage{tikz-cd}
\usepackage{tikz}
\usepackage{mathtools}
\usepackage{longtable}
\usepackage{booktabs}
\usepackage{comment}
\usepackage{textcomp}

\DeclareMathAlphabet{\mathpzc}{OT1}{pzc}{m}{it}

\usepackage{tikz}
\usetikzlibrary{arrows}

\newtheorem{theorem}{Theorem}[section]
\newtheorem{lemma}[theorem]{Lemma}
\newtheorem{proposition}[theorem]{Proposition}

\theoremstyle{definition}
\newtheorem{definition}[theorem]{Definition}
\theoremstyle{remark}
\newtheorem{remark}[theorem]{Remark}

\newtheorem*{Acknowledgments}{Acknowledgments}

\newcommand{\ord}{{\rm ord}}

\newcommand{\mult}{{\rm mult}}

\newcommand{\Proj}{\operatorname{Proj}}

\newcommand{\Spec}{{\rm Spec}}

\newcommand{\Supp}{{\rm Supp}}
\newcommand{\Sing}{{\rm Sing}}

\def\<{\langle}
\def\>{\rangle}

\title{Local inequalities for $cD$ and $cE$ singularities%Birational rigidity of Fano varieties containing a $cD_4$ singularity
}

\author{Chi-Kang Chang}
\address{\rm RIKEN Center for Interdisciplinary Theoretical and Mathematical Sciences (iTHEMS), 2-1 Hirosawa, Wako, Saitama 351-0198, Japan}
\email{chi-kang.chang@riken.jp}

\author{Jheng-Jie Chen}
\address{\rm Department of Mathematics, National Central University, Taoyuan City, 320, Taiwan}
\email{jhengjie@math.ncu.edu.tw}

\author{Takuzo Okada}
\address{\rm Faculty of Mathematics, Kyushu University, Fukuoka 819-0395, Japan}
\email{tokada@math.kyushu-u.ac.jp}

\begin{document}

\maketitle
\begin{abstract}
We obtain inequalities for isolated $cD_n, cE_6, cE_7$ and $cE_8$ singularities, analogues of the $4\mu^2/(n+1)$-inequality for isolated $cA_n$ singularity in \cite{KOP}. These inequalities are sharp. This enables us to prove the birational rigidity of certain families of Fano $3$-fold weighted hypersurfaces, which contain only terminal quotient singularities and one $cD_n$ singularity.   
\end{abstract}

%%%%%%%%%%%%%%%%%%%%%%%%%%%%%%%%%%
%%%%%%%%%%%%%%%%%%%%%%%%%%%%%%%%%%
\section{Introduction} \label{sec:intro}
%%%%%%%%%%%%%%%%%%%%%%%%%%%%%%%%%%
%%%%%%%%%%%%%%%%%%%%%%%%%%%%%%%%%%

A Fano variety is said to be birationally rigid if it has a unique Mori fiber structure up to birational equivalence. 
Using the Noether-Fano method, a birational non-rigid Fano variety $X$ yields a mobile linear system $\mathcal{M}\subset |-\mu K_X|$ for some rational number $\mu$ satisfying that the pair $(X,\frac{1}{\mu} \mathcal{M})$ has non-canonical singularities at a certain subvariety $Z\subset X$. When $\dim X=3$, $Z$ may be a curve or a point $P$. Suppose that $P\in X$ is non-singular. The $4\mu^2$-inequality is well-known and frequently used to exclude divisorial contractions whose centers are non-singular $3$-fold germs $P\in X$ in the Sarkisov program. It plays a crucial role in proving birational rigidity of quasismooth Fano $3$-fold weighted hypersurfaces of index $1$ (see \cite{CP}, \cite{CPR}).   
%\begin{theorem}[\cite[Corollary~3.4]{Corti}, and \cite[Theorem 2.1]{Puk13}]\label{4mu2ineq} Suppose that $P\in X$ is a germ of a smooth $3$-fold such that $P$ is a non-canonical singularity of $(X,\frac{1}{\mu} \mathcal{M})$. Then for general members $D_1, D_2\in \mathcal{M}$, one has \[ \mult_P (D_1 \cap D_2) >4\mu^2.\]
%\end{theorem}  

 Recently, the last author, Krylov, Paemurru and Park extended the $4\mu^2$-inequality to the case where $P\in X$ is a germ of a compound Du Val of type $cA_1$ in \cite[Theorem 1.2]{KOPP}. Then, a generalization for $cA_n$ was established as follows where $n$ is a positive integer:
\begin{theorem}[={\cite[Theorem 1.3]{KOP}}]\label{cAnineq} Suppose that $P\in X$ is a germ of a $cA_n$ point such that $P$ is a non-canonical singularity of $(X,\frac{1}{\mu} \mathcal{M})$, where $\mu > 0$ is a rational number and $\mathcal{M}$ is a mobile linear system of Cartier divisors on $X$. Then for general members $D_1, D_2\in \mathcal{M}$, one has \[ \mult_P (D_1 \cdot D_2) >\frac{4}{n+1}\mu^2.\]
\end{theorem}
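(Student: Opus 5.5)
The plan is a reduction to the smooth $4\mu^2$-inequality along a finite cover, combined with a covering trick that controls multiplicities. Write the $cA_n$ germ as
\[
P\in X=\{xy+g(z,t)=0\}\subset\mathbb{C}^4,
\]
where $g$ has order $n+1$ and no linear or quadratic monomials in $x,y$ beyond $xy$. Because the pair $(X,\frac1\mu\mathcal M)$ is not canonical at $P$, there is an exceptional divisor $E$ over $P$ satisfying
\[
a(E,X,\tfrac1\mu\mathcal M)<0, \qquad\text{that is,}\qquad \mathrm{ord}_E\mathcal M>\mu\, a(E,X).
\]
We take a general hyperplane section $S\ni P$. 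It is a Du Val surface of type $A_m$ with $m\le n$ (by genericity $m=n$), and we restrict to it. By inversion of adjunction the pair $(S,\frac1\mu\mathcal M|_S)$ is not log canonical at $P$. More precisely, after replacing $\mu$ by a slightly smaller value, the pair $(S,\frac{1}{\mu}\mathcal M|_S)$ is not klt at $P$. This step is the standard Corti-type reduction. Note that the hyperplane section must be chosen through $P$ and generic enough that it avoids components of the base locus of $D_1\cap D_2$ other than $P$.

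Next we use the local intersection number on the surface. For a curve germ $C=D_1|_S$, together with $D_2|_S$, we have
\[
\mathrm{mult}_P(D_1\cdot D_2)\ \ge\ (D_1|_S\cdot D_2|_S)_P,
\]
with equality for a general $S$ when $P$ is an isolated component. On the other side, $S$ is the quotient $\mathbb{C}^2/\mu_{n+1}$ of type $\frac1{n+1}(1,-1)$. Let $\pi\colon\mathbb{C}^2\to S$ be the quotient map, which is étale in codimension one. The pair $(\mathbb{C}^2,\frac1\mu\pi^*\mathcal M|_S)$ is then also not klt at the origin. The two-dimensional version of the $4\mu^2$-inequality (Corti's inequality: if $(\mathbb{C}^2,a_1\Delta_1+\dots)$, or the pair $(\mathbb{C}^2,\frac1\mu(\Delta_1,\Delta_2)$ generically), is not log canonical, then $(\Delta_1\cdot\Delta_2)_0>4\mu^2$) gives
\[
(\pi^*D_1|_S\cdot\pi^*D_2|_S)_0>4\mu^2 .
\]
Finally, the projection formula gives $(\pi^*D_1\cdot\pi^*D_2)_0=(n+1)(D_1|_S\cdot D_2|_S)_P$. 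Dividing by $n+1$ yields $\mathrm{mult}_P(D_1\cdot D_2)>\frac{4}{n+1}\mu^2$.

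I expect the main obstacle to be the first step: passing from non-canonicity of the threefold pair to non-log-canonicity of the surface pair with the same coefficient $\frac1\mu$. Inversion of adjunction naturally gives only that $(X,S+\frac1\mu\mathcal M)$ is not plt, so one needs $S$ to contain the centre appropriately; the center is $P$ and $S$ is a general Cartier section through it. The delicate point is that the threefold discrepancy $a(E,X)\ge1$ is bounded below, which is what upgrades "not canonical" to "not lc on $S$" without loss. If this direct route leaks a constant, I would instead work with a weighted blowup, or with the explicit divisorial contractions to $cA_n$ points (Kawakita's classification), and compute multiplicities along the exceptional divisor directly.
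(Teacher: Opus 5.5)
Your argument is correct and is exactly the route this paper takes in Section~2: the $A_n$ case of Theorem~\ref{ineqklt} (cyclic cover of order $n+1$, \cite[Proposition~5.20]{KM}, Corti's inequality, division by $|G|=n+1$), combined with the hyperplane-section and inversion-of-adjunction step from the proof of Theorem~\ref{3foldlocalineq}, which applies verbatim to $cA_n$; this differs from the original proof in \cite{KOP}, which uses a Fulton--Lazarsfeld type inequality on a single weighted blowup. Three slips need tidying: carry ``not log canonical'' (not ``not klt'', and without shrinking $\mu$) from $S$ up to $\mathbb{C}^2$, since non-lc is what \cite[Proposition~5.20]{KM} transfers and what Corti's strict inequality requires; the input to inversion of adjunction is simply $\mult_E S\ge 1$, which gives $a(E,X,\frac1\mu\mathcal M+S)<-1$, and $a(E,X)\ge 1$ plays no role; and for arbitrary $S$ one has $\mult_P(D_1\cdot D_2)\le (D_1|_S\cdot D_2|_S)_P$, so your argument relies only on the equality for general $S$.
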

 In \cite{KOPP, KOP}, the authors applied Theorem \ref{cAnineq} to obtain the birational rigidity for many families of Fano 3-fold weighted complete intersections with at worst terminal quotient and isolated $cA_n$ singularities.  
Furthermore, they extended the birational rigidity for sextic double solids which contain at worst $cA_1$ and ordinary $cA_2$ singularities as well.

Recall that a Gorenstein terminal $3$-fold germ was classified as a compound Du Val of type $cA_n, cD_n (\textup{for }n\geq 4), cE_6, cE_7$ and $cE_8$ (see \cite{Rei80, Rei83}). It is then natural to ask the analogies of Theorem \ref{cAnineq} for the cases $cD_n, cE_6, cE_7,$ and $cE_8$.

The aim of this article is to establish the following inequalities: 
\begin{theorem}[=Theorem \ref{3foldlocalineq}] \label{intro3foldlocalineq}
    Let $P \in X$  be a germ of a 3-dimensional terminal singularity of type either $cD_n$ or $cE_n$, $\mathcal{M}$ a mobile linear system of Cartier divisors on $X$ and $\mu$ a positive rational number.
    Suppose that $P$ is a center of non-canonical singularities of the pair $(X, \frac{1}{\mu} \mathcal{M})$.
    Then, for general members $D_1, D_2 \in \mathcal{M}$, we have
    \[
    \mult_P (D_1 \cdot D_2) >
    \begin{dcases}
        \frac{1}{n-2} \mu^2, & \text{if $P \in X$ is of type $cD_n$}, \\
        \frac{1}{6} \mu^2, & \text{if $P \in X$ is of type $cE_6$}, \\
        \frac{1}{12} \mu^2, & \text{if $P \in X$ is of type $cE_7$}, \\
        \frac{1}{30} \mu^2, & \text{if $P \in X$ is of type $cE_8$}.
    \end{dcases}
    \]
\end{theorem}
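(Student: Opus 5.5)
We would follow the strategy of \cite{KOPP, KOP}: reduce the three-dimensional statement to a two-dimensional local inequality on a general hyperplane section, which is a Du Val singularity. Let $P \in X$ be of type $cD_n$ or $cE_n$. Then a general hyperplane section $S \ni P$ (a general member of $|\mathfrak{m}_P|$ through $P$) is a Du Val surface singularity of the corresponding type $D_n$ or $E_n$. Since $P$ is a center of non-canonical singularities of $(X, \frac{1}{\mu}\mathcal{M})$, inversion of adjunction shows that $(S, \frac{1}{\mu}\mathcal{M}|_S)$ is not log canonical at $P$. More precisely, the pair $(X, S + \frac{1}{\mu}\mathcal{M})$ is not log canonical near $P$, because $S$ is Cartier and has discrepancy at least $1$ along any divisor centered at $P$. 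Moreover, for general $D_1, D_2$ we have
\[
\mult_P(D_1\cdot D_2) \ge (D_1|_S \cdot D_2|_S)_P ,
\]
where the right-hand side is the local intersection number on $S$. So it suffices to prove a surface inequality. If $S$ is a Du Val germ of type $D_n, E_6, E_7, E_8$ and $L_1, L_2$ are effective Weil divisors without common components such that $(S, \frac{1}{\mu}(a L_1 + (1-a)L_2))$ is not log canonical at $P$ for all $a$ (or just for the general member of the restricted mobile system), then $(L_1\cdot L_2)_P > c\,\mu^2$, with $c = \frac{1}{n-2}, \frac16, \frac1{12}, \frac1{30}$ respectively. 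We would also expect to need the slightly stronger \emph{non-klt $\Rightarrow$ bound} version, so that strictness is preserved.

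For the surface inequality we would pass to the minimal resolution $\pi\colon \tilde S \to S$, whose exceptional curves $E_1,\dots,E_r$ form the Dynkin configuration. Write $\pi^*L_i = \tilde L_i + \sum_j a_{ij}E_j$. Then
\[
(L_1\cdot L_2)_P = \tilde L_1\cdot \pi^*L_2 \ge -\Big(\sum_j a_{1j}E_j\Big)\cdot\Big(\sum_j a_{2j}E_j\Big),
\]
which is a quadratic form given by the inverse Cartan matrix. The non-lc condition, together with the crepancy of $\pi$, says that $(\tilde S, \frac1\mu(\tilde L + \sum a_jE_j))$ is not lc at some point $Q$ over $P$. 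By the smooth $4\mu^2$-type inequality (or the log version on smooth surfaces: non-lc at a point of $E_j$, or of $E_j\cap E_k$, forces lower bounds on $a_j$, $a_k$ and local intersections), one gets linear constraints. A typical one is $a_j > \mu$, or $a_j + a_k > \mu$ together with bounds on $\tilde L\cdot E_j$. The problem then becomes minimizing $-(\sum a_jE_j)^2$ subject to these constraints. The minimum of $-A^{-1}$-type quadratic forms is attained with a single coefficient $\ge\mu$ at the vertex $E_j$ minimizing the diagonal entry $(-C^{-1})_{jj}$. For $D_n$ this is the short branch, where the value is $1$; for the fork, $n-2$ appears as the reciprocal structure. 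We need to check that the minimal value gives exactly $1/(n-2)$, $1/6$, $1/12$, $1/30$. Equivalently, we use the fundamental cycle and multiplicities: $1/30$ is $1/|E_8$-relevant$|$ in the sense of the largest coefficient-type quantity.

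The main obstacle is the surface optimization. Non-lc at a point of $\tilde S$ does not simply give $a_j>\mu$ for a single $j$, since it can occur at a point of $\tilde L\cap E_j$ with small $a_j$. In that case we must use the local intersection $\tilde L_1\cdot\tilde L_2$ at $Q$, bounded below by $4(\mu - a_j)\mu$-type terms via the smooth log inequality, and combine it with the quadratic form. We would first try to reduce, by a convexity argument, to the cases where the constraint is active at one vertex or one edge. Then we would verify the bound case by case for each Dynkin diagram using the explicit inverse Cartan matrices. Sharpness would be checked by exhibiting the weighted blowup realizing equality, which also indicates which vertex is extremal.
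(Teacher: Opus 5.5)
The first step of your proposal is the paper's own, and it is fine: take a general hyperplane section $S$, use $\mult_E S\ge1$ to see that $(X,S+\frac1\mu\mathcal M)$ is not lc, apply inversion of adjunction, and note that in fact $\mult_P(D_1\cdot D_2)=\mathcal L^2$ for general $S$. The gap is in the surface inequality, which carries all the content. Your sketch of it is incomplete and partly wrong, in two places.

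\textbf{The extremal vertex is backwards.} With $C$ the Cartan matrix, $\min\{a^{T}Ca : a_j\ge\mu\}=\mu^2/(C^{-1})_{jj}$. So the worst vertex is the one \emph{maximizing} $(C^{-1})_{jj}$: the trivalent vertex, with $(C^{-1})_{jj}=n-2,6,12,30$. The vertex with entry $1$ (the end $E_1$ of the long arm of $D_n$) would give $\mathcal L^2>\mu^2$, which the examples in Remarks~\ref{optimalcD} and~\ref{optimalcE} show is false. These numbers also cannot be read off the fundamental cycle, whose largest coefficient for $E_8$ is $6$.

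\textbf{The node case is not a routine check.} The local non-lc condition at $\tilde P\in E_j\cap E_k$ imposes no linear constraint such as $a_j+a_k>\mu$, since it can be caused by $\tilde{\mathcal L}$ alone. What Corti's theorem gives, with $x=a_j/\mu$ and $y=a_k/\mu\le1$, is $\tilde{\mathcal L}^2>4(1-x)(1-y)\mu^2$. Combining this with the quadratic form needs a real idea. If both curves through $\tilde P$ only had the worst constant $1/d$, your lower bound along $x=y=t$ would be $4(1-t)^2+t^2/d$, which is $<1/d$ for $t$ slightly below $1$, so the method fails. What rescues it:
\begin{enumerate}
\item two adjacent vertices cannot both be trivalent;
\item the non-trivalent one has a strictly smaller entry $d'<d$;
\item $4(1-x)(1-y)+\max\{x^2/d,\,y^2/d'\}\ge 1/d$ on $[0,1]^2$ as soon as $1/d'<2$ and $1/d+2\sqrt{d'/d}<2$ (Lemma~\ref{Lem:minfunc}).
\end{enumerate}
Your ``convexity argument plus case-by-case check'' contains none of this.

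\textbf{How the paper avoids this.} The paper's proof (Theorem~\ref{ineqklt}) sidesteps the optimization entirely. Write $S\cong\mathbb C^2/G$ with $G\subset\mathrm{SL}(2,\mathbb C)$ binary dihedral, tetrahedral, octahedral or icosahedral, of order $4(n-2),24,48,120$. The quotient map $\pi$ is \'etale outside the origin, so $K_{\mathbb C^2}=\pi^*K_S$ and $(\mathbb C^2,\frac1\mu\pi^*\mathcal L)$ is still not lc at $0$. The smooth $4\mu^2$-inequality then gives $(\pi^*\mathcal L)^2>4\mu^2$. Since $\mathcal L^2=(\pi^*\mathcal L)^2/|G|$, this yields exactly $4/|G|=\frac1{n-2},\frac16,\frac1{12},\frac1{30}$.

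\textbf{Completing your route.} It can be finished as in the Appendix, taking $d'=n-3$ for $D_n$ and $d'=10/3,\,15/2,\,20$ for $E_6,E_7,E_8$. For $E_7$ and $E_8$ the worst non-trivalent vertex is the neighbour on the long arm. This is worse than the constant asserted in Lemma~\ref{Lem:quadformE}(2), but $1/d+2\sqrt{d'/d}<2$ still holds. Alternatively, the weak-exceptionality remark at the end of the paper shows that $a_j>\mu$ at the trivalent vertex always occurs, which removes the node case altogether.
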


We note that these inequalities are optimal which are explained in Remarks \ref{optimalcD} and \ref{optimalcE} where we estimate these lower bounds by illustrating many divisorial extractions of minimal discrepancies.  We show that the inequality in Theorem \ref{cAnineq} is optimal as well in Remark \ref{optimalcA}. As an application of Theorem \ref{intro3foldlocalineq}, we prove the birationally rigid for Fano $3$-fold weighted hypersurfaces of index $1$ containing terminal cyclic quotient singularities and $cD_n$ singularities as follows:

\begin{theorem}[=Theorem \ref{rigidity}] \label{introrigidity}
    Let $X$ be a Fano threefold weighted hypersurface of index $1$ listed in Table~\ref{table:Fanohyp} that is quasismooth along the non-Gorenstein locus of the ambient weighted projective $4$-space.
    Let $n'$ be the positive integer in the 5th column ``$cD_n$" of Table~\ref{table:Fanohyp}.
    If $X$ has at most terminal singularities of type $cD_n$ for $n \leq n'$ in addition to terminal cyclic quotient singularities, then $X$ is birationally rigid.
\end{theorem}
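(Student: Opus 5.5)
Suppose $X$ is not birationally rigid. By the Noether--Fano method there is a mobile linear system $\mathcal{M}\subset|-\mu K_X|$ with $\mu>0$ such that $(X,\frac{1}{\mu}\mathcal{M})$ is not canonical. Fix a centre $Z$ of non-canonical singularities. The plan is to show that every candidate for $Z$ either leads to a contradiction or is untwisted by a birational involution. The cases are:
\begin{enumerate}[(i)]
\item a curve;
\item a nonsingular point;
\item a terminal cyclic quotient point;
\item a $cD_n$ point.
\end{enumerate}

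Cases (i)--(iii) are handled as in the quasismooth situation \cite{CPR, CP}.
\begin{itemize}
\item[(i)] Curves are excluded by the usual degree argument: $(-K_X)\cdot Z<(-K_X)^3$ is impossible for a non-canonical centre.
\item[(ii)] Nonsingular points are excluded by the $4\mu^2$-inequality, combined with a test surface in $|-kK_X|$ through the point.
\item[(iii)] At terminal quotient points we either exclude the Kawamata blowup by a test class, or untwist it by the known quadratic/elliptic involutions.
\end{itemize}
Because $X$ is quasismooth along the non-Gorenstein locus, these arguments are local near the quotient points, or use only global intersection numbers. They therefore go through verbatim: the extra Gorenstein singular points do not affect the computation of $(-K_X)^3$ or the construction of the involutions. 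I would check, row by row of Table~\ref{table:Fanohyp}, that the involutions constructed in the quasismooth case still exist. The relevant equation shapes only involve the monomials near the quotient point, and these are unchanged.

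The new case is (iv), where $Z=P$ is a $cD_n$ point with $n\le n'$. Take general $D_1,D_2\in\mathcal{M}$. By Theorem~\ref{intro3foldlocalineq},
\[
\mult_P(D_1\cdot D_2)>\frac{1}{n-2}\mu^2 .
\]
For an upper bound, choose the smallest integer $k$ such that the sections of $\mathcal{O}_X(-kK_X)$ vanishing at $P$ have base locus which is finite near $P$. Since $P$ is Gorenstein, it lies away from the quotient points, so some coordinate of weight dividing suitable $k$ is nonzero at $P$; one then uses the ratios $x_i^{k/a_i}/x_j^{k/a_j}$. A general $T$ in this system contains no component of the $1$-cycle $D_1\cdot D_2$ and satisfies $\mult_P T\ge1$. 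Hence
\[
\mult_P(D_1\cdot D_2)\le T\cdot D_1\cdot D_2=k\mu^2(-K_X)^3 .
\]
This contradicts the lower bound as soon as $k(-K_X)^3\le\frac{1}{n-2}$. The integer $n'$ in the table is precisely the largest $n$ for which this holds with the available $k$.

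The main obstacle is the choice of $k$, together with the verification that the base locus of $|\mathcal{I}_P(-kK_X)|$ is zero-dimensional at $P$ for every possible position of $P$. This position could lie on some coordinate strata where certain coordinates vanish, which forces a larger $k$.

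My first attempt would be $k=\mathrm{lcm}$ of two weights of coordinates not vanishing at $P$. If this $k$ is too large, I would instead use a two-step argument: first cut by a surface $S\in|-aK_X|$ through $P$, then bound the multiplicity of the effective cycle $D|_S$ by an intersection with a second curve-cutting divisor. I would treat the strata case by case, as in \cite{KOP}.
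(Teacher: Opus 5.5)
Your plan follows the paper's proof. Curves, smooth points and terminal quotient points are handled by existing results. A $cD_n$ point is excluded by playing the lower bound of Theorem~\ref{3foldlocalineq} against a $P$-isolating class $-lK_X$, which forces $n-2>1/(l(-K_X)^3)$, and rigidity then follows from the untwisting framework \cite[Theorem~2.14]{KOP}.

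The pieces you leave open are imported in the paper rather than re-derived:
\begin{itemize}
\item Cases (i)--(iii) are \cite[Propositions~5.5, 5.12 and 5.15]{KOPP}, which are the versions for hypersurfaces quasismooth only along the non-Gorenstein locus. Cite these rather than claim that \cite{CPR} goes through verbatim.
\item The obstacle you flag is settled by \cite[Lemmas~6.9 and 6.10]{KOP}, which give $l\le l_{\mathrm{ic}}$ for every position of $P$. There the isolating class is taken in the $|\mathfrak{m}_P^k(kL)|$ sense, i.e.\ a $\mathbb{Q}$-divisor $T\sim_{\mathbb{Q}}-lK_X$ with $\ord_P T\ge 1$. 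This is more flexible than your single-vanishing system in $|-kK_X|$, which can force a larger $k$.
\item The table value is by definition $n'=\lfloor 1/(l_{\mathrm{ic}}(-K_X)^3)\rfloor+2$, so the contradiction is immediate once $l\le l_{\mathrm{ic}}$.
\end{itemize}
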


In Remark \ref{existcD4}, we show the existence of a Fano $3$-fold in Theorem \ref{introrigidity}, which provides a first example of birationally rigid Fano 3-fold admitting a terminal singular point of type $cD_4$.

%Note that the approach to prove \cite[Theorem 1.2]{KOPP} is to consider a tower of blowups and then to track the local inequality of Fulton–Lazarsfeld in \cite[Theorem 1.3]{FL82} at each level of the tower. In \cite{KOP}, the authors generalized the local inequality of Fulton–Lazarsfeld to a single weighted blow up to obtain Theorem \ref{cAnineq}.

 Note that the method of proving \cite[Theorem 1.2]{KOPP} used a tower of blow-ups and applied the Fulton-Lazarsfeld local inequality \cite[Theorem 1.3]{FL82} step-by-step. In \cite{KOP}, the authors generalized the inequality to a single weighted blow-up, obtaining Theorem \ref{cAnineq}.
In what follows, we explain our argument for proving Theorem \ref{intro3foldlocalineq}. By taking a general hyperplane section $S$ through $P\in X$ and applying the inversion of adjunction, we reduce the inequalities in Theorem \ref{intro3foldlocalineq} to establishing the inequalities for surface $S$ in Theorem \ref{ineqklt}, which follows from a finite cover trick \cite[Proposition 5.20]{KM} and the $4\mu^2$ inequality for smooth surface \cite[Theorem~3.1]{Corti}. Notice that Theorem \ref{intro3foldlocalineq} follows from Theorems \ref{3foldlocalineq} and \ref{ineqDE} as well. In the Appendix, we provide the original and computational approach of Theorem \ref{ineqDE}. 

\begin{remark}
    We would like to thank Kento Fujita and Mircea Musta{\c{t}}{\u{a}} for helpful comments on Theorem \ref{ineqDE} so that we can improve and find an easier argument of Theorem \ref{ineqklt}. 
\end{remark}

\begin{Acknowledgments}
The first author thanks Jungkai Alfred Chen, Yoshinori Gongyo, and Xiaolong Hu for their discussion with him, and thanks the National Taiwan University and NCTS for the warm hospitality during his visit. The second author was partially supported by NCTS and NSTC of Taiwan (Grant Numbers: 114-2115-M-008-004-MY2 and 113-2123-M-002-019). He would like to thank Kyushu University for
its hospitality during his visit. The third author was partially supported by JSPS KAKENHI Grant Number 23K22389.
\end{Acknowledgments}

%%%%%%%%%%%%%%%%%%%%%%%%%%%%%%%%%%
%%%%%%%%%%%%%%%%%%%%%%%%%%%%%%%%%%
 %\section{Preliminaries}
%%%%%%%%%%%%%%%%%%%%%%%%%%%%%%%%%%
%%%%%%%%%%%%%%%%%%%%%%%%%%%%%%%%%%

%%%%%%%%%%%%%%%%%%%%%%%%%%%%%%%%%%
%%%%%%%%%%%%%%%%%%%%%%%%%%%%%%%%%%
\section{Local inequalities for $cD$ and $cE$ singularities}
%%%%%%%%%%%%%%%%%%%%%%%%%%%%%%%%%%
%%%%%%%%%%%%%%%%%%%%%%%%%%%%%%%%%%

Let $P \in S$ be a germ of a normal surface.
For a mobile linear system $\mathcal{L}$ on $S$, denote by $\mathcal{L}^2$ the local intersection multiplicity of two general members of $\mathcal{L}$, that is, $\mathcal{L}^2 = \mult_P (D_1 \cdot D_2)$, where $D_1, D_2 \in \mathcal{L}$ are general members. We begin by establishing a lower bound for $\mathcal{L}^2$.

\begin{theorem} \label{ineqklt}
    Let $P\in S\simeq (o\in \mathbb{C}^2/G)$ be a germ of klt surface singularity 
    where $G\subset GL(2,\mathbb{C})$ is a finite group. Assume that $\mathcal{L}$ is a mobile linear system on $S$ and $\mu$ is a positive rational number such that the pair $(S, \frac{1}{\mu} \mathcal{L})$ is not log canonical at $P$.
    Then one has 
\[ \mathcal{L}^2>\frac{4}{|G|}\mu^2,\]  where $|G|$ is the order of $G$.
    In particular, if $P\in S$ is a Du Val singularity, then 
    \[
    \mathcal{L}^2 > \begin{dcases}
        \frac{4}{n+1} \mu^2, & \text{if $P \in S$ is of type $A_n$}, \\
        \frac{1}{n-2} \mu^2, & \text{if $P \in S$ is of type $D_n$}, \\
        \frac{1}{6} \mu^2, & \text{if $P \in S$ is of type $E_6$}, \\
        \frac{1}{12} \mu^2, & \text{if $P \in S$ is of type $E_7$}, \\
        \frac{1}{30} \mu^2, & \text{if $P \in S$ is of type $E_8$}.
    \end{dcases}
    \]
\end{theorem}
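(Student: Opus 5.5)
Pass to the finite cover $\pi\colon \mathbb{C}^2 \to \mathbb{C}^2/G \simeq S$ and apply the $4\mu^2$-inequality for smooth surfaces \cite[Theorem~3.1]{Corti} upstairs. Three ingredients are needed: the non-lc condition pulls back, intersection numbers multiply by $|G|$ under pullback, and the orders of the binary polyhedral groups give the Du Val bounds.

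First reduce to the case where $G$ contains no pseudo-reflections. Let $H\subset G$ be the subgroup generated by pseudo-reflections. By Chevalley--Shephard--Todd, $\mathbb{C}^2/H\simeq\mathbb{C}^2$, so $S\simeq \mathbb{C}^2/(G/H)$ with $G/H$ acting on $\mathbb{C}^2$, and one may assume it acts linearly and small. Since $4/|G/H|\ge 4/|G|$, it suffices to prove the inequality for a small group. For $G$ small, $\pi$ is étale in codimension one, so $K_{\mathbb{C}^2}=\pi^*K_S$. Set $\tilde{\mathcal L}=\pi^*\mathcal L$; this is a mobile linear system on $\mathbb{C}^2$ (Weil divisors pull back as $\mathbb{Q}$-Cartier divisors, and since $S$ is $\mathbb{Q}$-factorial the construction is fine). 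By \cite[Proposition~5.20]{KM}, the pair $(S,\frac1\mu\mathcal L)$ is log canonical if and only if $(\mathbb{C}^2,\frac1\mu\tilde{\mathcal L})$ is. Hence the upstairs pair is not lc, and since $\pi^{-1}(P)=\{o\}$, it fails to be lc exactly at $o$.

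On a smooth surface, if $(\mathbb{C}^2,\frac1\mu\tilde{\mathcal L})$ is not lc at $o$, then $\tilde{\mathcal L}^2>4\mu^2$ at $o$. This is Corti's inequality \cite[Theorem~3.1]{Corti}, stated for $(1-a)$-weights; the case needed here is non-lc with empty boundary. Next compare intersection numbers. For general $D_1,D_2\in\mathcal L$, projection formula for the finite map $\pi$ of degree $|G|$, totally ramified over $P$, gives $\mult_o(\pi^*D_1\cdot\pi^*D_2)=|G|\,\mult_P(D_1\cdot D_2)$. Here local intersection numbers on $S$ are defined via Mumford's $\mathbb{Q}$-valued intersection, and these numbers are supported at a single point since the $D_i$ share no component. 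Combining, $|G|\,\mathcal L^2>4\mu^2$.

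For the Du Val list, $G\subset SL(2,\mathbb{C})$ is small. Its orders are $n+1$ for $A_n$, $4(n-2)$ for the binary dihedral group giving $D_n$, $24$ for $E_6$, $48$ for $E_7$ and $120$ for $E_8$. Substituting these gives $4/(n+1)$, $1/(n-2)$, $1/6$, $1/12$ and $1/30$.

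The main obstacle is the careful justification of the pullback step. One must check that $\pi^*\mathcal L$ is mobile with general members equal to pullbacks of general members, that the $(\mathbb{Q}$-$)$intersection multiplicity scales exactly by $|G|$ at the single preimage point, and that \cite[Proposition~5.20]{KM} applies to the mobile boundary $\frac1\mu\mathcal L$. For the last point, replace $\mathcal L$ by $\frac1{k}\sum_{i=1}^k D_i$ with $k\gg0$ general members; this is standard for mobile systems and preserves the lc threshold.
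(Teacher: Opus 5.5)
Your proposal is correct and follows the same route as the paper: pull back along the quotient map, use \cite[Proposition~5.20]{KM} to make the pair on $\mathbb{C}^2$ non-lc at the origin, apply Corti's $4\mu^2$-inequality there, divide by $\deg\pi$, and substitute the orders of the binary polyhedral groups. The only difference is your explicit Chevalley--Shephard--Todd reduction to a small group, which the paper uses implicitly when it asserts $K_{\mathbb{C}^2}=\pi^*K_S$. That reduction could also be bypassed: in $\pi^*(K_S+\frac1\mu\mathcal L)=K_{\mathbb{C}^2}-R+\frac1\mu\pi^*\mathcal L$, dropping the ramification term $-R$ only enlarges the boundary, so $(\mathbb{C}^2,\frac1\mu\pi^*\mathcal L)$ is still not lc.
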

\begin{proof}
%Suppose that the germ $P\in S$ is klt surface singularity. By shrinking if necessary, we may assume that \[(P\in S)\simeq (o\in \mathbb{C}^2/G), \] for some finite group $G\subset GL(2,\mathbb{C})$. 
Denote by $\pi:\mathbb{C}^2\to \mathbb{C}^2/G$ the natural quotient map. Note that the normal surface $S$ is $\mathbb{Q}$-Gorenstein and $\pi$ is \'etale in codimension 2, which implies $K_{\mathbb{C}^2}=\pi^* K_S$. As $(S,\frac{1}{\mu}\mathcal{L})$ is not log canonical at $P$, it follows from \cite[ (4) of Proposition 5.20]{KM} that $(\mathbb{C}^2,\frac{1}{\mu}\pi^*{\mathcal{L}})$ is not log canonical at the origin $O=\pi^{-1}(o)$ of $\mathbb{C}^2$. Thus, for general members $D_1, D_2\in \mathcal{L}$, $$\mathcal{L}^2=\mult_P(D_1\cdot D_2)=\frac{1}{\deg \pi}\mult_O(\pi^*(D_1)\cdot \pi^*(D_2))>\frac{4}{|G|}\mu^2,$$ where the last inequality is established by Corti in \cite[Theorem~3.1]{Corti}. 

In particular, if $P\in S$ is Du Val of type $A_n$  (resp. $D_n, E_6, E_7$ and $E_8$), the above group $G$ is isomorphic to the cyclic group (resp. binary dihedral group binary tetrahedral group,  binary octahedral group, and binary icosahedral group) of order $|G|=n+1$ (resp. $4(n-2), 24, 48$, and $120$). We are done.
\end{proof}

 By using Theorem \ref{ineqklt}, we can generalize Theorem \ref{cAnineq} to $cD$ and $cE$ points.

\begin{theorem} \label{3foldlocalineq}
   Let $P \in X$  be a germ of a 3-dimensional terminal singularity of type either $cD_n$ or $cE_n$, $\mathcal{M}$ a mobile linear system of Cartier divisors on $X$ and $\mu$ a positive rational number.
    Suppose that $P$ is a center of non-canonical singularities of the pair $(X, \frac{1}{\mu} \mathcal{M})$.
    Then, for general members $D_1, D_2 \in \mathcal{M}$, we have
    \[
    \mult_P (D_1 \cdot D_2) >
    \begin{dcases}
        \frac{1}{n-2} \mu^2, & \text{if $P \in X$ is of type $cD_n$}, \\
        \frac{1}{6} \mu^2, & \text{if $P \in X$ is of type $cE_6$}, \\
        \frac{1}{12} \mu^2, & \text{if $P \in X$ is of type $cE_7$}, \\
        \frac{1}{30} \mu^2, & \text{if $P \in X$ is of type $cE_8$}.
    \end{dcases}
    \]
\end{theorem}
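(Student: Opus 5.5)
We reduce to Theorem \ref{ineqklt} by cutting $X$ with a general hyperplane section through $P$ and using inversion of adjunction, following the outline in the introduction.

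\emph{Step 1: choose the section.} By definition of $cD_n$ (resp.\ $cE_n$), a general hyperplane section $S \ni P$ of $X$ (i.e.\ $S$ the zero locus of a general element of $\mathfrak{m}_P$) is a Du Val singularity. Its type is $D_m$ with $m \le n$ (resp.\ $E_m$ with $m\le n$). Here we should use the convention that the index in $cD_n$/$cE_n$ is that of the general section; if instead the general section can be \emph{less} degenerate, it only helps, since the bounds are monotone in the group order. Indeed, $|G|$ for $D_m$ is $4(m-2)\le 4(n-2)$, and $24\le 48\le 120$ for the $E$ types. The point $P$ might also make $S$ of type $A_m$ with $m+1 \le 4(n-2)$; that would again only improve the bound. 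I would verify this from Reid's classification: the general section of $cD_n$ is $D_n$ and of $cE_n$ is $E_n$. Since $X$ is Gorenstein, $S$ is Cartier and $K_S=(K_X+S)|_S$.

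\emph{Step 2: inversion of adjunction.} The pair $(X,\frac1\mu\mathcal M)$ is not canonical at $P$, with $P$ a center of non-canonical singularities. The standard argument (as in Corti's proof of the $4\mu^2$-inequality and in \cite{KOP}) is then: for a general hyperplane section $S$ through $P$, the pair $(X, S+\frac1\mu \mathcal M)$ is not log canonical near $P$. Indeed, a divisor $E$ with center $P$ and discrepancy $a(E,X,\frac1\mu\mathcal M)<0$ has $\mult_E S\ge 1$, so $a(E,X,S+\frac1\mu\mathcal M)<-1$. The non-lc locus then contains $P$, and since $\mathcal M$ is mobile it is not contained in $S$. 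By inversion of adjunction \cite[Theorem 5.50]{KM}, $(S,\frac1\mu \mathcal M|_S)$ is not log canonical at $P$. The restriction $\mathcal L=\mathcal M|_S$ is mobile on $S$ because $S$ is general.

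\emph{Step 3: compare multiplicities.} For general $D_1,D_2\in\mathcal M$, the curve $D_1\cap D_2$ meets the general section $S$ only at $P$ near $P$. Since $S$ is cut out by a general element of $\mathfrak m_P$, we have $\mult_P(D_1\cdot D_2)=(D_1\cdot D_2\cdot S)_P\ge$ hmm, more precisely $(D_1\cdot D_2\cdot S)_P=\mult_P(D_1|_S\cdot D_2|_S)=\mathcal L^2$, and for a general hyperplane through $P$ this local intersection number equals $\mult_P(D_1\cdot D_2)$, the multiplicity of the 1-cycle at $P$ (with the normalization of multiplicity used in \cite{KOP}). Theorem \ref{ineqklt} then gives $\mathcal L^2>\frac{4}{|G|}\mu^2$, which yields the listed bounds.

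The main obstacle is Step 2. We need to make sure that non-canonicity with center exactly $P$, rather than along a curve, forces the non-lc center of $(S,\frac1\mu\mathcal L)$ at $P$. We also need to check the multiplicity convention in Step 3, since $X$ is singular and $\mult_P$ of a cycle on $X$ must match the intersection with a general hyperplane section. I would handle the latter by defining $\mult_P$ via a general hyperplane section, as in \cite{KOP}.
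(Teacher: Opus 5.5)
Your proposal is correct and is essentially the paper's own proof. The paper takes a general hyperplane section $S \ni P$, which is Du Val of type $D_n$ (resp.\ $E_n$); observes that $(X, \frac{1}{\mu}\mathcal{M} + S)$ is not log canonical at $P$; uses inversion of adjunction to make $(S, \frac{1}{\mu}\mathcal{L})$ non-log-canonical at $P$; uses $\mult_P(D_1 \cdot D_2) = \mathcal{L}^2$ for general $S$; and concludes with Theorem~\ref{ineqklt}.

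The only small imprecision is your citation. \cite[Theorem~5.50]{KM} is the plt/klt form of inversion of adjunction, so you should either invoke the log canonical version (Kawakita), or argue as follows. Replace $\mu$ by a slightly larger $\mu'$ for which $a(E, X, \frac{1}{\mu'}\mathcal{M}) < 0$ still holds, deduce that $(S, \frac{1}{\mu'}\mathcal{L})$ is not klt, and then use that $S$ is klt, so the offending divisor has positive multiplicity along $\mathcal{L}$, to conclude that $(S, \frac{1}{\mu}\mathcal{L})$ is not log canonical.
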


\begin{proof}
    Let $S \subset X$ be a general hyperplane section through $P$ so that $P \in S$ is Du Val of type $D_n$ (resp.\ $E_n$) if $P \in X$ is of type $cD_n$ (resp.\ $cE_n$), and that $S$ does not contain any base curve of $\mathcal{M}$.
    Then, $\mathcal{L} := \mathcal{M}|_S$ is a mobile linear system on $S$ and the pair $(X, \frac{1}{\mu} \mathcal{M} + S)$ is not log canonical at $P \in X$.
    By inversion of adjunction, the pair $(S, \frac{1}{\mu} \mathcal{L})$ is not log canonical at $P$.
    Let $D_1, D_2 \in \mathcal{M}$ be general members.
    Since $S$ is a general hyperplane section through $P$, we have
    \[
    \mult_P (D_1 \cdot D_2) = \mult_P (D_1|_S \cdot D_2|_S) = \mathcal{L}^2
    \]
    and the assertion follows from Theorem~\ref{ineqklt}.
\end{proof}

In the rest of this section, we shall show that all inequalities in Theorems \ref{cAnineq} and  \ref{3foldlocalineq} are optimal. At first, we prove the inequality in Theorem \ref{cAnineq} is optimal, by the following example.

\begin{remark}\label{optimalcA}
    We explain that the local inequality for terminal $cA_n$ singularity given in \cite[Theorem~1.3]{KOP} as well as the $4 \mu^2$-inequality for smooth points are optimal.

    Let $n \geq 0$ be an integer and let $P \in X$ be the germ at origin of the hypersurface
    \[
    0 \in \{xy + z^{n+1} + t^{2(n+1)} = 0\} \subset \mathbb{A}^4,
    \]
    which is a germ of a terminal $cA_n$ singularity.
    By an abuse of notation, we regard a $cA_0$ singularity as a smooth point.
    Let $\mathcal{M}$ be the mobile linear system on $X$ generated by $x$ and $y$.
    For general members $D_1, D_2 \in \mathcal{M}$, we have 
    \[
    \mult_P (D_1 \cdot D_2) = n+1.
    \]
    Let $\varphi \colon Y \to X$ be the weighted blow-up of $X$ at $P$ with weight $\mathrm{wt} (x, y, z, t) = (n+1, n+1, 2, 1)$.
    Then $\varphi$ is a birational morphism whose exceptional divisor $E$ is a prime divisor of discrepancy $2$ and we have $\mult_E (\mathcal{M}) = n+1$. Note that $\varphi$ is a divisorial contraction when $n$ is even but not when $n$ is odd since in that case $Y$ has a singularity worse than terminal.
    Hence, for any sufficiently small $\varepsilon > 0$, the point $P$ is the center of non-canonical singularities of the pair $(X, \frac{1}{\mu} \mathcal{M})$, where $\mu := (n+1)/2 - \varepsilon$.
    We can write
    \[
    \mult_P (D_1 \cdot D_2) = n+1 = N_{\varepsilon} \mu^2,
    \]
    where
    \[
    N_{\varepsilon} = \frac{n+1}{((n+1)/2 - \varepsilon)^2} \to \frac{4}{n+1} \quad (\varepsilon \to 0).
    \]
    This shows that the local inequality for $cA_n$ singularity is optimal.
    The above argument for $n = 0$ shows that the $4 \mu^2$-inequality for smooth points is optimal.
\end{remark}

%In what follows, we shall show that  the inequalities in Theorem \ref{3foldlocalineq} and Theorem \ref{ineqklt} are all optimal. 
The following result is useful when we compute the multiplicity of the intersection subvariety of two general members of a mobile linear system on a hypersurface germ.

\begin{proposition} \label{Prop:compmult}
    Let $P \in X$ be the germ at origin of the hypersurface 
    \[
    0 \in \{f (x, y, z, t) = 0\} \subset \mathbb{A}^4,
    \]
    where $f (x, y, z, t) \in \mathbb{C} [x, y, z, t]$.
    We assume that $P \in X$ is an isolated singularity.
    Let $\mathcal{M}$ be the linear system generated by a set $\Lambda \subset \mathbb{C} [x, y, z, t]$ of polynomials.
    Let $a, b, c$ and $d$ be positive integers such that $\gcd \{a, b, c, d\} = 1$.
    We assume that the following conditions are satisfied:
    \begin{enumerate}
        \item The common zero locus of polynomials in $\Lambda$ is the origin $\{P\}$.
        \item Any polynomial in $\Lambda$ is quasi
        homogeneous of degree $m$ with respect to the weight $\mathrm{wt} (x, y, z, t) = (a, b, c, d)$.
    \end{enumerate}
    Then, for general members $D_1, D_2 \in \mathcal{M}$, we have
    \[
    \mult_P (D_1 \cdot D_2) = \frac{m^2 \mathrm{wt} (f) \min \{a, b, c, d\}}{abcd},
    \] 
    where $\mathrm{wt} {f}$ is the weight of $f$ with respect to the weights $\mathrm{wt} (x, y, z, t) = (a, b, c, d)$.
\end{proposition}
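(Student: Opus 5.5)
The plan is to compute $\mult_P(D_1\cdot D_2)$ as a local intersection number on $X$ and then transfer it, via the $\mathbb{C}^*$-action attached to the weight $(a,b,c,d)$, to an intersection number on a weighted projective surface. Since the common zero locus of $\Lambda$ is $\{P\}$, two general members $D_1, D_2$ meet $X$ properly. So $D_1\cdot D_2$ is a $1$-cycle $C$ supported near $P$. Its multiplicity at $P$ equals $\min$ over general hyperplanes $H$ through $P$ of $(C\cdot H)_P$. It also equals the intersection of $C$ with a general linear form $\ell$, computed as the length $\dim_{\mathbb{C}} \mathcal{O}_{X,P}/(g_1,g_2,\ell)$, where $g_1,g_2$ are general elements of $\Lambda$. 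We may take $\ell$ to be a general linear combination of $x,y,z,t$.

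The key steps, in order, are the following.

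\textbf{First step: specialize $\ell$.} Take $\ell$ to be the coordinate with minimal weight, say $t$ with $d=\min\{a,b,c,d\}$. The function $\mult_P$ of a curve is bounded above by intersection with any hyperplane through $P$. Equality holds as long as $\ell=t$ does not contain a tangent direction of $C$. I expect this to be guaranteed by quasi-homogeneity: every component of $C$ is a $\mathbb{C}^*$-orbit closure $\{(\lambda^a p_1,\dots,\lambda^d p_4)\}$. The lowest-order term of such an orbit in $\lambda$ is governed by the smallest weight among nonzero coordinates. The multiplicity at $P$ of the orbit through a point $p$ with $p_4\neq 0$ is exactly $d$, and it is realized by intersecting with a generic linear form. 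This is the step needing care: the general $g_1,g_2$ must cut out orbits whose $t$-coordinate is nonzero.

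\textbf{Second step: handle orbits with $t=0$.} Because of the previous concern, I would instead avoid it. I would compute $(C\cdot\{\ell=0\})_P$ for a general linear form $\ell$ via the orbit description. Each orbit through a general point $p$ has multiplicity $\min\{w_i : p_i\neq 0\}$, which is $\min\{a,b,c,d\}$ for general points. The possible degenerate orbits are finite in number, and a generic choice of $g_1,g_2$ avoids any coordinate hyperplane locus on $\{f=0\}$. This uses condition (1) and the genericity of the $D_i$.

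\textbf{Third step: count orbits.} With quasi-homogeneity, $C$ is a cone over the zero-dimensional scheme $Z=\{f=g_1=g_2=0\}\subset\mathbb{P}(a,b,c,d)$. Weighted Bézout gives the number of points of $Z$, counted suitably, as
\[
\deg Z=\frac{\mathrm{wt}(f)\,m^2}{abcd}.
\]
Here, for a point with trivial stabilizer, the orbit counts once in the degree. A general point of $Z$ lies off the singular strata, using the genericity of $g_1,g_2$ and the finiteness of the orbifold locus intersected with $\{f=0\}$. Finally, $\mult_P C=\sum_{\text{orbits}} d=\deg Z\cdot \min\{a,b,c,d\}$, which gives the formula.

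The main obstacle I anticipate is justifying that general members avoid the special strata of $\mathbb{P}(a,b,c,d)$ and coordinate hyperplanes. This requires that $\Lambda$ be base point free on $\{f=0\}\subset\mathbb{P}(a,b,c,d)$, which is condition (1). It also requires checking that $\mathcal{M}$ separates enough points, so that Bertini applies on the weighted projective surface. If that fails, I would fall back on computing the length $\dim \mathbb{C}[x,y,z,t]/(f,g_1,g_2,t)$ directly, using graded Hilbert series: the Hilbert series of a graded complete intersection is $\prod(1-s^{\deg})/\prod(1-s^{w_i})$, and its value at $s=1$ gives $\mathrm{wt}(f)m^2 d/(abcd)$.
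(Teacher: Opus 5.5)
Your cone and weighted-B\'ezout argument is correct when $f$ itself is quasi-homogeneous, which is the situation of Remarks~\ref{optimalcD} and \ref{optimalcE}. In that case it is a genuinely different route from the paper's. The paper never uses a $\mathbb{C}^*$-action on $X$. It takes the weighted blow-up $\Phi\colon\mathcal{Y}\to\mathbb{A}^4$ and reads off $v_{\mathcal{E}}(\mathcal{D}_i)=m$, $v_{\mathcal{E}}(X)=\mathrm{wt}(f)$ and $v_{\mathcal{E}}(\mathcal{H})=\min\{a,b,c,d\}$. It then uses \cite[Theorem~3.1]{KOP} to turn the emptiness of $\tilde{\mathcal{D}}_1\cap\tilde{\mathcal{D}}_2\cap Y\cap\tilde{\mathcal{H}}\cap\mathcal{E}$ into the equality.

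\textbf{The gap.} The proposition does not assume $f$ is quasi-homogeneous. Hypothesis (2) concerns only $\Lambda$, and $\mathrm{wt}(f)$ is the lowest weight of a monomial of $f$ (it is $v_{\mathcal{E}}(X)$ in the paper). For such $f$ your third step fails:
\begin{itemize}
\item $X$ is not $\mathbb{C}^*$-stable, so $D_1\cdot D_2$ is not a union of orbit closures;
\item $\{f=g_1=g_2=0\}$ is not a subscheme of $\mathbb{P}(a,b,c,d)$;
\item $\mathbb{C}[x,y,z,t]/(f,g_1,g_2,t)$ is not graded.
\end{itemize}
So neither weighted B\'ezout nor your Hilbert-series fallback applies.

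\textbf{The missing idea.} Replace $f$ by its lowest-weight part $f_w$, where $w=\mathrm{wt}(f)$. In the paper this happens automatically, since $Y\cap\mathcal{E}$ is cut out by $f_w$ on $\mathcal{E}\cong\mathbb{P}(a,b,c,d)$. Within your framework you can do it as follows.
\begin{itemize}
\item The ring $\mathcal{O}_{X,P}/(g_1,g_2)$ is Cohen--Macaulay of dimension $1$. Hence $\mult_P(D_1\cdot D_2)=\dim_{\mathbb{C}}\mathcal{O}_{\mathbb{A}^4,P}/(f,g_1,g_2,\ell)$ for a general linear form $\ell$.
\item Suppose the weight-initial forms $f_w,g_1,g_2,\ell_{\min}$ have only the origin as common zero. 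Then they form a regular sequence generating the initial ideal.
\item So this length equals $\dim_{\mathbb{C}}\mathbb{C}[x,y,z,t]/(f_w,g_1,g_2,\ell_{\min})$, which your Hilbert series evaluates to $m^2 w\min\{a,b,c,d\}/(abcd)$.
\end{itemize}
Taking $\ell$ general rather than $t$ also removes your worry in the first step about tangent directions.

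\textbf{The nondegeneracy condition.} This argument needs $\{f_w=0\}\cap\{\ell_{\min}=0\}$ to be a curve in $\mathbb{P}(a,b,c,d)$; this is the paper's claim that $Y\cap\tilde{\mathcal{H}}\cap\mathcal{E}$ is $1$-dimensional. For quasi-homogeneous singular $f$ it holds, because $f$ is irreducible at $P$: an isolated threefold hypersurface singularity is normal. This irreducibility is also the justification you omitted for $Z$ missing the locus where all minimal-weight coordinates vanish. Without quasi-homogeneity the condition can fail. For example, $f=xt+x^3+y^3+z^3$ with weights $(2,2,2,1)$ and $\Lambda=\{x,y,z,t^2\}$ gives $\mult_P(D_1\cdot D_2)=2$, while the formula gives $3/2$.

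\textbf{A small correction.} The orbit closure through $p$ has multiplicity $\min\{w_i : p_i\neq 0\}/|\mathrm{Stab}(p)|$, not $\min\{w_i : p_i\neq 0\}$. This is harmless, because weighted B\'ezout also counts $p$ with weight $1/|\mathrm{Stab}(p)|$.
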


\begin{proof}
    We set $\mathcal{X} = \mathbb{A}^4$ and let $\mathcal{M}_{\mathcal{X}}$ be the linear system on $\mathcal{X}$ generated by $\Lambda$.
    Let $\Phi \colon \mathcal{Y} \to \mathcal{X}$ be the weighted blow-up of $\mathcal{X}$ at the origin $P$ with weight $\mathrm{wt} (x, y, z, t) = (a, b, c, d)$ and let $\mathcal{E}$ be the exceptional divisor of $\Phi$.
    Let $Y \subset \mathcal{Y}$ be the proper transform of $X$.
    For $i = 1, 2$, let $\mathcal{D}_i$ be a general member of the linear system $\mathcal{M}_{\mathcal{X}}$ on $\mathcal{X}$ and set $D_i := {\mathcal{D}_i}|_X$ which is a general member of $\mathcal{M}$.
    Let $\mathcal{H} \subset \mathcal{X}$ be a general hyperplane of $\mathcal{X}$ through $P$.
    For a divisor $\mathcal{D}$ on $\mathcal{X}$, we denote by $\tilde{\mathcal{D}}$ the proper transform of $\mathcal{D}$ on $\mathcal{Y}$.
    The linear system on $\mathcal{E}$ defined by
    \[
    \tilde{\mathcal{M}}_{\mathcal{X}} |_{\mathcal{E}} := \{\, \tilde{\mathcal{D}}|_{\mathcal{E}} \mid \mathcal{D} \in \mathcal{M}_{\mathcal{X}}\,\}
    \]
    is base point free by the assumptions (1) and (2).
    The subscheme
    \[
    Y \cap \tilde{\mathcal{H}} \cap \mathcal{E} \subset \mathcal{E} \cong \mathbb{P} (a, b, c, d)
    \]
    is of $1$-dimensional and hence $\tilde{\mathcal{D}}_1 \cap \tilde{\mathcal{D}}_2 \cap Y \cap \tilde{\mathcal{H}} \cap \mathcal{E} = \emptyset$ since $\tilde{\mathcal{D}}_i|_{\mathcal{E}}$ is a general member of the base point free linear system $\tilde{\mathcal{M}}_{\mathcal{X}} |_{\mathcal{E}}$ for $i = 1, 2$.
    We have $v_{\mathcal{E}} (\mathcal{D}_i) = m$, $v_{\mathcal{E}} (X) = \mathrm{wt} (f)$ and $v_{\mathcal{E}} (\mathcal{H}) = \min \{a, b, c, d\}$, where $v_{\mathcal{E}}$ is the valuation corresponding to the exceptional divisor $\mathcal{E}$.
    By \cite[Theorem~3.1]{KOP}, we obtain
    \[
    \begin{split}
        \mult_P (D_1 \cdot D_2) &= \mult_P (\mathcal{D}_1 \cap \mathcal{D}_2 \cap X \cap \mathcal{H}) \\
        &= \frac{m^2 \mathrm{wt} (f) \min \{a, b, c, d\}}{abcd},
    \end{split}
    \]
    and the proof is complete.
\end{proof}

\begin{remark}\label{optimalcD}
    We explain that the local inequality for terminal $cD_n$ singularity given in Theorem~\ref{3foldlocalineq} is optimal, where $n \geq 4$.
    
    Let $P \in X$ be the germ at origin of the hypersurface
    \[
    0 \in \{x^2 + y^2 t + z^{2(n-1)} + t^{n-1} = 0\} \subset \mathbb{A}^4.
    \]
    
    We claim that $P \in X$ is a terminal $cD_n$ singularity.
    It is clear that $P \in X$ is an isolated singularity.
    Let $P \in S$ be a general hyperplane section of $P \in X$ through $P$ cut by the equation $z = \alpha x + \beta y + \gamma t$, where $\alpha, \beta, \gamma \in \mathbb{C}$ are general.
    Then $S$ is the hypersurface 
    \[
    \{\phi := x^2 + y^2 t + (\alpha x + \beta y + \gamma t)^{2(n-1)} + t^{n-1} = 0\} \subset \mathbb{A}^3.
    \]
    The least weight term of $\phi$ with respect to weight $\mathrm{wt} (x, y, t) = (n-1, n-2, 2)$ is $x^2 + y^2 t + t^{n-1}$.
    By \cite[Corollary~4.7]{Pae}, $P \in S$ is equivalent to the germ
    \[
    0 \in \{x^2 + y^2 t + t^{n-1}\} \subset \mathbb{A}^3,
    \]
    which is Du Val of type $D_n$.
    This proves the claim.
    
    Let $\mathcal{M}$ be the mobile linear system generated by the set of monomials 
    \[
    \Lambda := \{x^{(n-2)}, \, y^{(n-1)}, \, z^{(n-1)(n-2)}, \, t^{(n-1)(n-2)/2}\}.
    \]
    Let $\varphi \colon Y \to X$ be the weighted blow-up of $X$ at the origin $P$ with weight $\mathrm{wt} (x, y, z, t) = (n-1, n-2, 1, 2)$.
    Let $E$ be the exceptional divisor of $\varphi$, which is isomorphic to the weighted hypersurface
    \[
    E \cong \{x^2 + y^2 t + z^{2(n-1)} + t^{n-1} = 0\} \subset \mathbb{P} (n-1, n-2, 1, 2).
    \]
    The variety $Y$ has $3$ singular points among which one is of type $\frac{1}{n-2} (1, 1, n-3)$ and $2$ are of type $\frac{1}{2} (1, 1, 1)$ (all of them are of type $\frac{1}{2} (1, 1, 1)$ when $n = 4$). 
    The singular locus is the set $\{x = z = 0\} \cap E$ (resp.\ $\{y = z = 0\} \cap E \cup \{(0\!:\!1\!:\!0\!:\!0) \in E\}$) if $n$ is even (resp.\ odd).
    It follows that $\varphi$ is a divisorial contraction with center $P \in X$ and it is straightforward to check that its discrepancy is $1$.
    We have $\mult_E (\mathcal{M}) = (n-1)(n-2)$, and hence for a sufficiently small rational number $\varepsilon > 0$, the pair $(X, \frac{1}{\mu} \mathcal{M})$ is not canonical along $E$, where $\mu := (n-1)(n-2) - \varepsilon$.
    Applying Proposition~\ref{Prop:compmult} for the set $\Lambda$ and the weight $\mathrm{wt} (x, y, z, t) = (n-1, n-2, 1, 2)$, we obtain
        \[
    \begin{split}
        \mult_P (D_1 \cdot D_2) &= \mult_P (\mathcal{D}_1 \cap \mathcal{D}_2 \cap X \cap \mathcal{H}) \\
        &= (n-1)^2 (n-2) \\
        &= \frac{(n-1)^2(n-2)}{((n-1)(n-2) - \varepsilon)^2} \mu^2,
    \end{split}
    \]
    and
    \[
    \frac{(n-1)^2(n-2)}{((n-1)(n-2) - \varepsilon)^2} \to \frac{1}{n-2} \quad (\varepsilon \to 0).
    \]
    This shows that the local inequality for terminal $cD_n$ singularity is optimal.
    Similarly, consider the hyperplane section $S=\{z=0\}\cap X\subset \mathbb{A}^4$. Then $(S,\frac{1}{\mu}\mathcal{M}|_S)$ is not log canonical along $E|_S$ and \[\mult_P ({D_1}|_S \cdot {D_2}|_S)=\mult_P (D_1 \cdot D_2).\]
    Therefore, the local inequality for terminal $D_n$ singularity in Theorem \ref{ineqklt} is optimal.
\end{remark}

\begin{remark}\label{optimalcE}
    We explain that the local inequality for terminal $cE_n$ singularity given in Theorem~\ref{3foldlocalineq} is optimal, where $n = 6, 7, 8$.
    Let $P \in X$ be the germ at origin of the hypersurface
    \[
    0 \in \{f = 0\} \subset \mathbb{A}^4,
    \]
    where
    \[
    f =
    \begin{cases}
        x^2 + y^3 + z^4 + t^{12}, & \text{if $n = 6$}, \\
        x^2 + y^3 + y z^3 + t^{18}, & \text{if $n = 7$}, \\
        x^2 + y^3 + z^5 + t^{30}, & \text{if $n = 8$}.
    \end{cases}
    \] 
    
    We claim that $P \in X$ is a terminal $cE_n$ singularity.
    It is clear that $P \in X$ is an isolated singularity.
    We set
    \[
    (a, b, c) := 
    \begin{cases}
        (6, 4, 3), & \text{if $n = 6$}, \\
        (9, 6, 4), & \text{if $n = 7$}, \\
        (15, 10, 6), & \text{if $n = 8$}.
    \end{cases}
    \]
    Let $P \in S$ be a general hyperplane of $P \in X$ through $P$ cut by the equation $t = \alpha x + \beta y + \gamma z$, where $\alpha, \beta, \gamma \in \mathbb{C}$ are general.
    Then $S$ is the hypersurface
    \[
    \{\phi (x, y, z) = 0\} \subset \mathbb{A}^3,
    \]
    where $\phi := f (x, y, z, \alpha x + \beta y + \gamma z)$, and the least weight term of $\phi$ with respect to $\mathrm{wt} (x, y, z) = (a, b, c)$ is the polynomial
    \[
    \begin{cases}
        x^2 + y^3 + z^4, & \text{if $n = 6$}, \\
        x^2 + y^3 + y z^3, & \text{if $n = 7$}, \\
        x^2 + y^3 + z^5, & \text{if $n = 8$}.
    \end{cases}
    \]
    By \cite[Corollary~4.7]{Pae}, $P \in S$ is Du Val of type $E_n$.
    This proves the claim.
    
    Let $(a, b, c)$ be as above and we set $m := \mathrm{lcm} \{ a, b, c\}$.
    Let $\mathcal{M}$ be the mobile linear system on $X$ generated by the set of monomials
    \[
    \Lambda := \{x^{m/a}, \, y^{m/b}, \, z^{m/c}, \, t^m\}.
    \]
    Let $\varphi \colon Y \to X$ be the weighted blow-up of $X$ at $P$ with weight $\mathrm{wt} (x, y, z, t) = (a, b, c, 1)$, which is a divisorial contraction of discrepancy $1$ with center $P \in X$.
    Let $E$ be the exceptional divisor of $\varphi$.
    The variety $Y$ has $3$ singular points:
    \[
    \Sing (Y) =
    \begin{cases}
        \left\{ 2 \times \frac{1}{2} (1, 1, 1), \frac{1}{3} (1, 1, 2) \right\}, & \text{if $n = 6$}, \\
        \left\{ \frac{1}{2} (1, 1, 1), \frac{1}{3} (1, 1, 2), \frac{1}{4} (1, 1, 3) \right\}, & \text{if $n = 7$}, \\
        \left\{\frac{1}{2} (1, 1, 1), \frac{1}{3} (1, 1, 2), \frac{1}{5} (1, 1, 4) \right\}, & \text{if $n = 8$},
    \end{cases}
    \]
    
    It follows that $\varphi$ is a divisorial contraction with center $P \in X$ and it is straightforward to check that its discrepancy is $1$.
    We have $\mult_E (\mathcal{M}) = m$, and hence for a sufficiently small rational number $\varepsilon > 0$, the pair $(X, \frac{1}{\mu} \mathcal{M})$ is not canonical along $E$, where $\mu := m - \varepsilon$.
    Applying Proposition~\ref{Prop:compmult} for the set $\Lambda$ and the weight $\mathrm{wt} (x, y, z, t) = (a, b, c, 1)$, we obtain
    \[
    \mult_P (D_1 \cdot D_2) = \mult_P (\mathcal{D}_1 \cap \mathcal{D}_2 \cap X \cap \mathcal{H}) 
    = \frac{2 m^2}{bc} = N_{\varepsilon} \mu^2,
    \]
    where
    \[
    N_{\varepsilon} = 
    \begin{dcases}
        \frac{1}{6} \cdot \frac{m^2}{(m-\varepsilon)^2} \to \frac{1}{6}, & \text{if $n = 6$}, \\
        \frac{1}{12} \cdot \frac{m^2}{(m-\varepsilon)^2} \to \frac{1}{12}, & \text{if $n = 7$}, \\
        \frac{1}{30} \cdot \frac{m^2}{(m-\varepsilon)^2} \to \frac{1}{30}, & \text{if $n = 8$},
    \end{dcases}
    \]
    as $\varepsilon \to 0$.
    This shows that the local inequality for terminal $cE_n$ singularity is optimal. 
    Similarly, consider the hyperplane section $S=\{t=0\}\cap X\subset \mathbb{A}^4$. Then $(S,\frac{1}{\mu}\mathcal{M}|_S)$ is not log canonical along $E|_S$ and \[\mult_P ({D_1}|_S \cdot {D_2}|_S)=\mult_P (D_1 \cdot D_2).\]
    Therefore, the local inequality for terminal $E_n$ singularity in Theorem \ref{ineqklt} is optimal.
\end{remark}

%%%%%%%%%%%%%%%%%%%%%%%%%%%%%%%%%
%%%%%%%%%%%%%%%%%%%%%%%%%%%%%%%%%
\section{Application to birational rigidity of Fano 3-folds}
%%%%%%%%%%%%%%%%%%%%%%%%%%%%%%%%%
%%%%%%%%%%%%%%%%%%%%%%%%%%%%%%%%%

As an application of the local inequality obtained in the previous section, we prove birational rigidity of certain Fano threefolds admitting terminal singular points of type $cD_n$.

We recall basic definitions regarding weighted projective varieties.
For positive integers $a_0, \dots, a_4$, let
\[
R (a_0, \dots, a_4) = \mathbb{C} [x_0, \dots, x_4]
\]
be the graded ring whose grading is given by $\deg x_i = a_i$ for $0 \leq i \leq 4$.
A weighted projective $4$-space
\[
\mathbb{P} \coloneq \mathbb{P} (a_0, \dots, a_4) = \Proj R (a_0, \dots, a_4),
\]
is \textit{well-formed} if the greatest common divisor of any $4$ of the weights $a_0, \dots, a_4$ is $1$.
Let 
\[
X = \Proj R (a_0, \dots, a_4)/(f) \subset \mathbb{P}
\]
be a weighted hypersurface in a well-formed weighted projective $4$-space $\mathbb{P}$ defined by a homogeneous polynomial $f \in R (a_0, \dots, a_4)$ of degree $d$.
We say that $X$ is a \textit{Fano threefold weighted hypersurface of index $1$} if $X$ is $\mathbb{Q}$-factorial, has only terminal singularities and 
\[
d = \sum_{i=0}^4 a_i - 1.
\]
The \textit{non-quasismooth locus} of $X$, denoted $\mathrm{NQsm} (X)$, is defined by
\[
\mathrm{NQsm} (X) \coloneq \Pi (\Sing (C_X) \setminus \{0\}),
\]
where $\Pi \colon \mathbb{A}^5 \setminus \{0\} \to \mathbb{P}$ is the natural projection and 
\[
C_X \coloneq \Spec\ \mathbb{C} [x_0, \dots, x_4]/(f) \subset \mathbb{A}^5
\]
is the affine cone of $X$.
The \textit{quasismooth locus} of $X$, denoted $\mathrm{Qsm} (X)$, is the complement of $\mathrm{NQsm} (X)$ in $X$.
We say that $X$ is \textit{quasismooth along the non-Gorenstein locus of its ambient space} if $\Sing (\mathbb{P}) \cap X \subset \mathrm{Qsm} (X)$.
This implies that $X$ has cyclic quotient singularities along $\Sing (\mathbb{P}) \cap X$ that are naturally induced from $\mathbb{P}$.

\begin{theorem} \label{rigidity}
    Let $X$ be a Fano threefold weighted hypersurface of index $1$ listed in Table~\ref{table:Fanohyp} that is quasismooth along the non-Gorenstein locus of the ambient weighted projective $4$-space.
    Let $n'$ be the positive integer in the 5th column ``$cD_n$" of Table~\ref{table:Fanohyp}.
    If $X$ has at most terminal singularities of type $cD_n$ for $n \leq n'$ in addition to terminal cyclic quotient singularities, then $X$ is birationally rigid.
\end{theorem}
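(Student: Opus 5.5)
The proof will follow the standard Noether--Fano/Sarkisov framework. Suppose $X$ is not birationally rigid. Then there is a mobile linear system $\mathcal{M} \subset |-\mu K_X| = |\mathcal{O}_X(\mu)|$, with $\mu$ a positive rational number (an integer, since $X$ has index $1$), such that the pair $(X, \frac{1}{\mu}\mathcal{M})$ is not canonical. Let $Z$ be a center of non-canonical singularities. The goal is to exclude every possible $Z$:
\begin{enumerate}
\item curves;
\item smooth points;
\item terminal cyclic quotient points;
\item the $cD_n$ points.
\end{enumerate}

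For curves, smooth points and the quotient points, I will rely on the existing literature for the families in Table~\ref{table:Fanohyp}. By the hypothesis that $X$ is quasismooth along $\Sing(\mathbb{P})$, the quotient singularities of $X$ are exactly those of a quasismooth member of the family. For these families the arguments of \cite{CP, CPR} and \cite{KOPP, KOP} apply verbatim, because they use only the local structure near the center together with global intersection numbers such as $(-K_X)^3 = d/(a_0\cdots a_4)$.
\begin{itemize}
\item Curves are excluded by the degree bound: a center curve $\Gamma$ must satisfy $(-K_X)\cdot\Gamma < (-K_X)^3$, and a multiplicity argument gives a contradiction.
\item Smooth points are excluded by the $4\mu^2$-inequality. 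One compares it with $\mathcal{M}^2\cdot H \le \mu^2 (-K_X)^3 \cdot(\text{degree of a suitable } H)$, where $H$ is a section of low degree isolating the point.
\item Each quotient point is either excluded directly or untwisted by a birational involution, exactly as in \cite{CPR}. I need to check that the singular $cD_n$ points do not obstruct the involution constructions. Since these points are Gorenstein and isolated, the relevant elliptic or quadratic involutions should still be defined and remain biregular in codimension one.
\end{itemize}

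The new step is excluding a $cD_n$ point $P$ with $n \le n'$. Assume $P$ is a center of non-canonical singularities. Theorem~\ref{3foldlocalineq} gives
\[
\mult_P(D_1\cdot D_2) > \frac{1}{n-2}\mu^2 \ge \frac{1}{n'-2}\mu^2
\]
for general $D_1, D_2 \in \mathcal{M}$. For the global upper bound, I will choose a divisor $T \in |\mathcal{O}_X(k)|$ through $P$, for the smallest $k$ for which the sections of $\mathcal{O}_X(k)$ vanishing at $P$ cut out a finite set near $P$, or at least contain no component of $D_1\cdot D_2$ through $P$. Then
\[
\mult_P(D_1\cdot D_2) \le (D_1\cdot D_2\cdot T)_P \le \mu^2 k\,(-K_X)^3 = \mu^2 \frac{k\, d}{a_0\cdots a_4}.
\]
Since $P$ is a Gorenstein point, it lies in the smooth locus of $\mathbb{P}$, so $\mult_P T \ge 1$ can be used in the first inequality. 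The two inequalities together force $\frac{1}{n'-2} < \frac{k d}{a_0\cdots a_4}$. The column $n'$ of Table~\ref{table:Fanohyp} is presumably chosen exactly so that this fails, i.e.\ $n'-2 \le a_0\cdots a_4/(kd)$, which gives the contradiction.

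I expect the main obstacle to be making the choice of $T$ rigorous for each family. A $cD_n$ point lies in the open set where the coordinates allow low-degree sections, but I must show that for $k$ equal to the maximal weight $a_4$, or to $\mathrm{lcm}$ of the weights, the sections of $\mathcal{O}_X(k)$ vanishing at $P$ have no common curve through $P$ lying in $\Supp(D_1\cdot D_2)$. My first attempt will be the standard isolating-class argument of \cite{CPR}: writing $P=(p_0:\dots:p_4)$ with $p_j\ne0$ for some low weight $a_j$, the sections $p_j^{a_i/\gcd}x_i^{\ast}-p_i^{\ast}x_j^{\ast}$ isolate $P$. This requires a case analysis on which coordinates of $P$ vanish, done family by family against the table.
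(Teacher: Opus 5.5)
Your architecture is the paper's. Curves, smooth points and terminal quotient points are handled by citation. The paper uses \cite[Propositions~5.5, 5.12 and 5.15]{KOPP}, which are already proved for $X$ quasismooth only along the non-Gorenstein locus, so nothing remains to check about the involutions of \cite{CPR}. A $cD_n$ point is excluded by combining Theorem~\ref{3foldlocalineq} with an isolating divisor $T$ and the inequality $(T\cdot D_1\cdot D_2)\ge \ord_P(T)\,\mult_P(D_1\cdot D_2)$ of \cite[Lemma~2.11]{KOP}. Rigidity then follows from the criterion \cite[Theorem~2.14]{KOP}. That criterion is what turns ``excluded or untwisted by a self-link'' into rigidity; your opening ``suppose not rigid and exclude every center'' framing should be replaced by it.

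The gap is in the step you defer, the degree $k$ of $T$, and the values you propose would not work. The contradiction needs $(n'-2)\,k\,(-K_X)^3\le 1$ for a class $kH$ that isolates every possible $cD_n$ point. The paper gets this from \cite[Lemmas~6.9 and 6.10]{KOP}, which give a $P$-isolating class $-lK_X$ with $l\le l_{\mathrm{ic}}$, the fourth column of Table~\ref{table:Fanohyp}. Moreover, $n'$ is \emph{defined} as $\lfloor 1/(l_{\mathrm{ic}}(-K_X)^3)\rfloor+2$ (Remark~\ref{lic}), so $n-2\le n'-2\le 1/(l(-K_X)^3)<n-2$.

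By contrast, $k=a_4$ or $k=\mathrm{lcm}$ of the weights is far too large. For No.~37, $X_{18}\subset\mathbb{P}(1,2,3,4,9)$ with $(-K_X)^3=1/12$ and $n'=4$, one needs $k\le 6$. With $k=9$ your chain only gives $n-2>4/3$, which holds for every $n\ge 4$, so nothing is excluded.

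Your case split on which coordinates of $P$ vanish must also use the quasismoothness hypothesis, because some strata cannot be isolated cheaply. Write $f(0,0,x_2,x_3,x_4)=x_4^2+ax_2^3x_4+bx_2^6+cx_2^2x_3^3$, and suppose $X$ were singular at $P\in\{x_0=x_1=0\}$ with $x_2x_3\ne 0$. Then the curve $X\cap\{x_0=x_1=0\}$ has degree $1/6$ and multiplicity $\ge 2$ at $P$, so no class $kH$ with $k<12$ could isolate $P$. What rules out such points is the hypothesis of the theorem. A singularity there forces $c=0$ and $a^2=4b$, i.e.\ $f(0,0,x_2,x_3,x_4)=(x_4+\tfrac{a}{2}x_2^3)^2$. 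But then $X$ is not quasismooth at the $\frac13$-point $(0:0:1:0:-a/2)\in\Sing(\mathbb{P})$, since no monomial of $f$ is linear in $x_0$, $x_1$ or $x_3$ with the other factors in $x_2,x_4$ only.

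So the missing ingredient is the sharp isolating bound $l_{\mathrm{ic}}$. It relies on quasismoothness along $\Sing(\mathbb{P})$ to restrict where non-quasismooth points can lie. Without it your inequalities do not produce the column $n'$.
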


\begin{remark} \label{lic}
    The number $l_{\mathrm{ic}}$ in Table~\ref{table:Fanohyp} is the one given in \cite[Table~1]{KOP} and the number $n'$ given in the $5$th column of Table~\ref{table:Fanohyp} is defined by
    \[
    n' := \lfloor \frac{1}{l_{\mathrm{ic}} (-K_X)^3} \rfloor + 2.
    \]
\end{remark}

\begin{table}
    \caption{Fano $3$-fold weighted hypersurfaces of index $1$}
    \label{table:Fanohyp}
    \centering
    \begin{tabular}{clccc}
    \toprule
    \textnumero & $X_d \subset \mathbb{P} (a_0,\dots,a_4)$ & $-K_X^3$ & $l_{\mathrm{ic}}$ & $cD_n$ \\
    \midrule
    37 & $X_{18} \subset \mathbb{P} (1,2,3,4,9)$ & $1/12$ & $6$ & $4$ \\
    41 & $X_{20} \subset \mathbb{P} (1,1,4,5,10)$ & $1/10$ & $5$ & $4$ \\
    49 & $X_{21} \subset \mathbb{P} (1,3,5,6,7)$ & $1/30$ & $15$ & $4$ \\
    52 & $X_{22} \subset \mathbb{P} (1,2,4,5,11)$ & $1/20$ & $10$ & $4$ \\
    54 & $X_{24} \subset \mathbb{P} (1,1,6,8,9)$ & $1/18$ & $9$ & $4$ \\
    57 & $X_{24} \subset \mathbb{P} (1,3,4,5,12)$ & $1/30$ & $15$ & $4$ \\
    59 & $X_{24} \subset \mathbb{P} (1,3,6,7,8)$ & $1/42$ & $21$ & $4$ \\
    62 & $X_{26} \subset \mathbb{P} (1,1,5,7,13)$ & $2/35$ & $7$ & $4$ \\
    63 & $X_{26} \subset \mathbb{P} (1,2,3,8,13)$ & $1/24$ & $8$ & $5$ \\
    64 & $X_{26} \subset \mathbb{P} (1,2,5,6,13)$ & $1/30$ & $10$ & $5$ \\
    66 & $X_{27} \subset \mathbb{P} (1,5,6,7,9)$ & $1/70$ & $35$ & $4$ \\
    67 & $X_{28} \subset \mathbb{P} (1,1,4,9,14)$ & $1/18$ & $9$ & $4$ \\
    68 & $X_{28} \subset \mathbb{P} (1,3,4,7,14)$ & $1/42$ & $21$ & $4$ \\
    70 & $X_{30} \subset \mathbb{P} (1,1,4,10,15)$ & $1/20$ & $10$ & $4$ \\
    71 & $X_{30} \subset \mathbb{P} (1,1,6,8,15)$ & $1/24$ & $8$ & $5$ \\
    72 & $X_{30} \subset \mathbb{P} (1,2,3,10,15)$ & $1/30$ & $10$ & $5$ \\
    73 & $X_{30} \subset \mathbb{P} (1,2,6,7,15)$ & $1/42$ & $14$ & $5$ \\
    75 & $X_{30} \subset \mathbb{P} (1,4,5,6,15)$ & $1/60$ & $20$ & $5$ \\ 
    77 & $X_{32} \subset \mathbb{P} (1,2,5,9,16)$ & $1/45$ & $18$ & $4$ \\
    78 & $X_{32} \subset \mathbb{P} (1,4,5,7,16)$ & $1/70$ & $28$ & $4$ \\
    80 & $X_{34} \subset \mathbb{P} (1,3,4,10,17)$ & $1/60$ & $30$ & $4$ \\
    81 & $X_{34} \subset \mathbb{P} (1,4,6,7,17)$ & $1/84$ & $28$ & $5$ \\
    82 & $X_{36} \subset \mathbb{P} (1,1,5,12,18)$ & $1/30$ & $12$ & $4$ \\
    83 & $X_{36} \subset \mathbb{P} (1,3,4,11,18)$ & $1/66$ & $33$ & $4$ \\
    84 & $X_{36} \subset \mathbb{P} (1,7,8,9,12)$ & $1/168$ & $63$ & $4$ \\
    85 & $X_{38} \subset \mathbb{P} (1,3,5,11,19)$ & $2/165$ & $33$ & $4$ \\
    86 & $X_{38} \subset \mathbb{P} (1,5,6,8,19)$ & $1/120$ & $40$ & $5$ \\
    87 & $X_{40} \subset \mathbb{P} (1,5,7,8,20)$ & $1/140$ & $40$ & $5$ \\
    88 & $X_{42} \subset \mathbb{P} (1,1,6,14,21)$ & $1/42$ & $14$ & $5$ \\
    89 & $X_{42} \subset \mathbb{P} (1,2,5,14,21)$ & $1/70$ & $14$ & $7$ \\
    90 & $X_{42} \subset \mathbb{P} (1,3,4,14,21)$ & $1/84$ & $42$ & $4$ \\
    91 & $X_{44} \subset \mathbb{P} (1,4,5,13,22)$ & $1/130$ & $52$ & $4$ \\
    92 & $X_{48} \subset \mathbb{P} (1,3,5,16,24)$ & $1/120$ & $48$ & $4$ \\
    93 & $X_{50} \subset \mathbb{P} (1,7,8,10,25)$ & $1/280$ & $70$ & $6$ \\
    94 & $X_{54} \subset \mathbb{P} (1,4,5,18,27)$ & $1/180$ & $36$ & $7$ \\
    95 & $X_{66} \subset \mathbb{P} (1,5,6,22,33)$ & $1/330$ & $110$ & $5$ \\
    \bottomrule
    \end{tabular}
\end{table}

Before giving the proof, we recall the notion of isolating class.

\begin{definition}
    Let $P \in X$ be a point contained in the smooth locus of $\mathbb{P}$ and let $L$ be a divisor class on $X$.
    For a positive integer $k$, we define $|\mathfrak{m}_P^k (kL)|$ to be the sublinear system of $|k L|$ consisting of divisors vanishing at $P$ with order at least $k$.
    We say that $L$ \textit{isolates} $P$ or $L$ is a \textit{$P$-isolating class} if $P$ is an isolated component of the base locus of $|\mathfrak{m}_P^k (k L)|$ for some $k > 0$.

\end{definition}

\begin{proof}[Proof of Theorem~\ref{rigidity}]
    Let $X$ be as in Theorem~\ref{rigidity}.
    By \cite[Propositions~5.5 and 5.12]{KOPP}, no curve and no smooth point on $X$ is a maximal center.
    Moreover, by \cite[Proposition~5.15]{KOPP}, for each terminal quotient singular point $P \in X$, either $P$ is not a maximal center or there exists an elementary-self link initiated by the Kawamata blow-up $Y \to X$ of $X$ at $P$.

    Let $P \in X$ be a terminal singular point of type $cD_n$, where $n \leq n'$, and let $l \coloneq l_{\mathrm{ic}}$ be the positive integer in the 4th column of Table~\ref{table:Fanohyp}.
    Suppose that $P \in X$ is a maximal center of a mobile linear system $\mathcal{M} \sim - \mu K_X$, where $\mu \in \mathbb{Q}_{> 0}$.
    Let $D_1, D_2 \in \mathcal{M}$ be general members.
    By Theorem~\ref{3foldlocalineq}, we have
    \[
    \mult_P (D_1 \cdot D_2) > \frac{1}{n-2} \mu^2.
    \]
    By \cite[Lemmas~6.9 and 6.10]{KOP}, there is a $P$-isolating class $-l K_X$, where $l \leq l_{\mathrm{ic}}$.
    It follows that there is an effective $\mathbb{Q}$-divisor $T \sim_{\mathbb{Q}} - l K_X$ such that
    \[
    \ord_P (T) \coloneq \sup \{\, j \geq 0 \mid f_T \in \mathfrak{m}_{X, P}^{j}\,\} \geq 1, 
    \]
    where $f_T \in \mathcal{O}_{X, P}$ is the local equation of $T$ at $P$, and that $\Supp (T)$ does not contain any irreducible component of $D_1 \cap D_2$.
    By \cite[Lemma~2.11]{KOP}, we have
    \[
    l \mu^2 (-K_X)^3 = (T \cdot D_1 \cdot D_2) \geq \ord_P (T) \mult_P (D_1 \cdot D_2) > \frac{1}{n-2} \mu^2,
    \]
    which implies
    \[
    n'-2 \geq n-2 > \frac{1}{l (-K_X)^3} \geq \frac{1}{l_{\mathrm{ic}} (-K_X)^3}.
    \]
    This is impossible by the definition of $n'$ (see Remark~\ref{lic}).
    This shows that $P \in X$ is not a maximal center.

    By \cite[Theorem~2.14]{KOP}, $X$ is birationally rigid.
\end{proof}

\begin{remark} \label{existcD4}
    We explain the existence of a birationally rigid Fano 3-fold weighted hypersurface of index $1$ with a singular point of type $cD_4$.
    Let $X = X_{18} \subset \mathbb{P} := \mathbb{P} (1, 2, 3, 4, 9)$ be a general member of the linear system $\Sigma \subset |\mathcal{O}_{\mathbb{P}} (18)|$ on $\mathbb{P}$ generated by the monomials
    \[
    w^2, \, t^2 z x^7, \, y^3 x^{12}, \, z^3 x^9, \, t^3 y, \, z^6, \, y^9, \, t^4 x^2.
    \]
    Note that $X$ belongs to family \textnumero~37.
    The base locus of $\Sigma$ is $\{P, Q\}$, where $P := (1:0:0:0:0)$ and $Q := (0:0:0:1:0)$.
    It is easily verified that $X$ is quasismooth at $Q$, and hence $P$ is the unique non-quasismooth point of $X$.
    
    We claim that $P \in X$ is a terminal singularity of type $cD_4$.
    Rescaling coordinates, we may assume that
    \[
    f = w^2 + t^2 z x^7 + y^3 x^{12} + z^3 x^9 + g,
    \]
    where $g = g (x, y, z, t)$ is a linear combination of the monomials $t^3 y, z^6, y^9$ and $t^4 x^2$ so that $g \in (y, z, t)^4$.
    By setting $x = 1$, the germ $P \in X$ is isomorphic to
    \[
    0 \in \{\, w^2 + t^2 z + y^3 + z^3 + h = 0\,\} \subset \mathbb{A}^4,
    \]
    where $h := g (1,y,z,t) \in (y,z,t)^4$.
    Let $P \in S$ be the general hyperplane section of $P \in X$ cut by an equation $y = \alpha z + \beta t + \gamma w$, where $\alpha, \beta, \gamma \in \mathbb{C}$ are general.
    The germ $P \in S$ is isomorphic to the germ
    \[
    0 \in \{\, \phi (z, t, w) = 0 \,\} \subset \mathbb{A}^3,
    \]
    where
    \[
    \phi := w^2 + t^2 z + (\alpha z + \beta t + \gamma w)^3 + z^3 + h (\alpha z + \beta t + \gamma w, z, t) = 0
    \]
    The least weight terms of $\phi$ with respect to the weight $\mathrm{wt} (z, t, w) = (2,2,3)$ is $w^2 + t^2 z + (\alpha z + \beta t)^3 + z^3$.
    By \cite[Corollary~4.7]{Pae}, $P \in S$ is equivalent to the germ
    \[
    0 \in \{\, w^2 + t^2 z + (\alpha z + \beta t)^3 + z^3 = 0 \,\} \subset \mathbb{A}^3.
    \]
    It is then easy to see that $P \in S$ is a Du Val singularity of type $D_4$, and hence $P \in X$ is of type $cD_4$.

    The crucial part is to show that $X$ is $\mathbb{Q}$-factorial.
    Let $\Phi \colon \mathcal{Y} \to \mathbb{P}$ be the weighted blowup of $\mathbb{P}$ at the $cD_4$ point $P$ with weight $\mathrm{wt} (y,z,t,w) = (1,1,1,2)$ and let $\mathcal{E} \cong \mathbb{P} (1,1,1,2)$ be the exceptional divisor of $\Phi$.
    We set $Y := \Phi_*^{-1} X$ and let $\varphi := \Phi|_Y \colon Y \to X$ be the induced birational morphism.
    By slight abuse of notation, we identify $\mathcal{E}$ with the weighted projective space $\mathbb{P} (1,1,1,2)$ with homogeneous coordinates $y, z, t, w$ of weights $1,1,1,2$, respectively.
    The restriction $E := \mathcal{E}|_Y$ is isomorphic to  the hypersurface
    \[
    E \cong \{\, t^2 z + y^3 + z^3 = 0\,\} \subset \mathbb{P} (1,1,1,2)
    \]
    and it is smooth outside the point $\tilde{P} := (0:0:0:1) \in \mathcal{E}$.
    This implies that $Y$ is smooth outside $\tilde{P} \in Y$.
    It is also easy to check that the singularity $\tilde{P} \in Y$ is of type $\frac{1}{2} (1,1,1)$.
    This in particular shows that $Y$ is $\mathbb{Q}$-factorial.
    As a divisor on $\mathcal{Y}$, we have $Y = \Phi^*X - 3 \mathcal{E} \sim 18 \Phi^*\mathcal{H} - 3 \mathcal{E}$, where $\mathcal{H} \in \operatorname{Cl} (\mathbb{P})$ is the class such that $\mathcal{O}_{\mathbb{P}} (\mathcal{H}) \cong \mathcal{O}_{\mathbb{P}} (1)$.
    \textcolor{black}{For a variable $s \in \{x, y, z, t, w\}$, we set $\mathcal{D}_s := \{s = 0\} \subset \mathbb{P}$ and $\tilde{\mathcal{D}}_s := \Phi_*^{-1} \mathcal{D}_s$.
    We have $\tilde{\mathcal{D}}_x \sim \Phi^*\mathcal{H}$, $\tilde{\mathcal{D}}_y \sim 2 \Phi^*\mathcal{H} - \mathcal{E}$, $\tilde{\mathcal{D}}_z \sim 3 \Phi^*\mathcal{H} - \mathcal{E}$, $\tilde{\mathcal{D}}_t \sim 4 \Phi^*\mathcal{H} - \mathcal{E}$ and $\tilde{\mathcal{D}}_w \sim 9\Phi^*\mathcal{H} - 2 \mathcal{E}$.
    We see that
    \[
    18 \tilde{\mathcal{D}}_y + 10 \mathcal{E}, 12 \tilde{\mathcal{D}}_z + 4 \mathcal{E}, 9 \tilde{\mathcal{D}}_t + \mathcal{E}, 4 \tilde{\mathcal{D}}_w, 8 \tilde{\mathcal{D}}_y + 20 \tilde{\mathcal{D}}_x, 8 \tilde{\mathcal{D}}_z + 12 \tilde{\mathcal{D}}_x, 8 \tilde{\mathcal{D}}_t + 4 \tilde{\mathcal{D}}_x
    \]
    are members of the linear system $|4(9 \Phi^*\mathcal{H} - 2 \mathcal{E})|$ and the intersection of their supports is empty since $\tilde{\mathcal{D}}_y \cap \tilde{\mathcal{D}}_z \cap \tilde{\mathcal{D}}_t \cap \tilde{\mathcal{D}}_w = \tilde{\mathcal{D}}_x \cap \mathcal{E} = \emptyset$.
    This shows that $9\Phi^*\mathcal{H}-2\mathcal{E}$ is semiample.
    Since the Picard number of $\mathcal{Y}$ is $2$, the cone of nef divisors on $\mathcal{Y}$ is spanned by $2$ rays among which one is the ray generated by $\Phi^*\mathcal{H}$ and the other ray is generated by $\Phi^*\mathcal{H} - r \mathcal{E}$ for some $r \geq 2/9$. It follows that $Y \sim 18 \Phi^*\mathcal{H} - 3 \mathcal{E}$ is in the interior of the nef cone, hence it is ample.}
    %\textcolor{blue}{CHEN: I am sorry for not being familiar with the computations. I don't know why  $9\Phi^*\mathcal{H}-2\mathcal{E}$ is semiample and $Y \sim 18 \Phi^*\mathcal{H} - 3 \mathcal{E}$ is ample. Are there any references to see the details?}
    By the above arguments, we see that $Y$ is quasismooth, that is, its affine quasi-cone is smooth outside the irrelevant locus of the rank $2$ simplicial toric variety $\mathcal{Y}$ (see \cite[Definition~3.1]{BC}).
    By \cite[Proposition~10.8]{BC}, we see that the Picard number of $Y$ is $2$.
    It follows that $\varphi$ is a $K_Y$-negative extremal divisorial contraction from a normal projective $\mathbb{Q}$-factorial variety with only terminal singularities and this implies that $X$ is $\mathbb{Q}$-factorial and its Picard number is $1$.
    By Theorem~\ref{rigidity}, $X$ is birationally rigid.
    \end{remark}

\section{Appendix. Direct computations of generalized $4\mu^2$ formulas to Du Val singularities of type-$D$ and type-$E$.}

%Recall that Theorem \ref{ineqklt} is a generalization of 
The aim of the Appendix is to provide a computational approach, to prove Theorem \ref{ineqDE}, which establishes the lower bounds of the self-intersection $\mathcal{L}^2$ for Du Val singularities of type-$D$ or type-$E$ for mobile linear system $\mathcal{L}$ of $S$ in Theorem \ref{ineqklt}. The idea of obtaining these lower bounds of $\mathcal{L}^2$ is to calculate the positive semi-definiteness of the associated quadratic forms in Lemmas \ref{Lem:quadformD} and \ref{Lem:quadformE} to the minimal resolution  $\pi\colon T\to S$ and adopting the generalized inequality $4\mu^2$ in \cite[Theorem 3.1]{Corti} on $T$.

%We begin with the basic computations on the following positive semi-definiteness of the associated quadratic forms.

\begin{lemma} \label{Lem:quadformD}
    Let $n$ and $k$ be integers such that $n \geq 4$ and $1 \leq k \leq n$.
    We set
    \[
    q_{D_n} = \sum_{i=1}^{n} x_i^2 - \sum_{i=1}^{n-2} x_i x_{i+1} - x_{n-2} x_n.
    \]
    Then the following assertions hold.
    \begin{enumerate}
        \item The quadratic form 
        \[
        q_{D_n} - \frac{1}{2(n-2)} x_k^2
        \] 
        is positive semidefinite for any $k$.
        \item The quadratic form
        \[
        q_{D_n} - \frac{1}{2(n-3)} x_k^2
        \] 
        is positive semidefinite for any $k \ne n-2$.
    \end{enumerate}
\end{lemma}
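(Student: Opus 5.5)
The plan is to recognize $q_{D_n}$ as half of the Cartan quadratic form of the $D_n$ root system and to reduce both assertions to bounds on the diagonal entries of the inverse Cartan matrix. Label the nodes so that $1,2,\dots,n-2$ form a chain and $n-1,n$ are both attached to $n-2$. Let $A$ be the corresponding Cartan matrix: diagonal entries $2$, entry $-1$ for adjacent nodes, and $0$ otherwise. Then
\[
q_{D_n}(x)=\tfrac12\, x^{T}Ax ,
\]
since the cross terms $-\sum_{i=1}^{n-2}x_ix_{i+1}-x_{n-2}x_n$ are exactly the edges of this Dynkin diagram. The matrix $A$ is positive definite, because $D_n$ is a finite-type root system.

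The key general fact is the following. For a positive definite symmetric matrix $A$ and a real number $t\ge 0$, the form $x^TAx-t\,x_k^2$ is positive semidefinite if and only if $t\,(A^{-1})_{kk}\le 1$. To prove it, write $x_k=e_k^Tx=(A^{-1}e_k)^TAx$ and apply Cauchy--Schwarz for the inner product defined by $A$:
\[
x_k^2\le (e_k^TA^{-1}e_k)(x^TAx).
\]
This gives the "if" direction. Equality is attained at $x=A^{-1}e_k$, which gives the "only if" direction, although only "if" is needed here. Applied to $q_{D_n}-c\,x_k^2=\tfrac12\bigl(x^TAx-2c\,x_k^2\bigr)$, both assertions reduce to showing $2c\,(A^{-1})_{kk}\le 1$.

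The main step is computing the diagonal of $A^{-1}$ for $D_n$. To get each entry, solve $Av=e_k$ explicitly. For $k\le n-2$, try $v_i=\min(i,k)$ for $i\le n-2$ and $v_{n-1}=v_n=k/2$. Checking $Av=e_k$ is a direct check along the chain plus the two branch equations at node $n-2$. This gives $(A^{-1})_{kk}=k$. For $k=n-1$ (and symmetrically $k=n$), try $v_i=i/2$ for $i\le n-2$, $v_{n-1}=n/4$ and $v_n=(n-2)/4$; this gives $(A^{-1})_{kk}=n/4$. These are the standard inverse Cartan entries, and verifying them is the only computational part. It is routine, so I expect no real obstacle beyond the bookkeeping at the branch node.

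It remains to compare the bounds. For (1), take $c=\tfrac1{2(n-2)}$. We need $(A^{-1})_{kk}\le n-2$ for every $k$. This holds because $k\le n-2$ for the chain nodes, and $n/4\le n-2$ for $n\ge 4$. For (2), take $c=\tfrac1{2(n-3)}$. We need $(A^{-1})_{kk}\le n-3$ for $k\ne n-2$. This holds because $k\le n-3$ for the remaining chain nodes, and $n/4\le n-3$ if and only if $n\ge 4$. The excluded index $k=n-2$ is exactly where $(A^{-1})_{kk}=n-2$ exceeds $n-3$, which explains the hypothesis in (2).
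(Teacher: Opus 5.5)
Your proof is correct, but it takes a different route from the paper.

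\textbf{The paper's route.} It works directly with the matrix $M^{(k)}$ of $2(q_{D_n}-c\,x_k^2)$. It computes every leading principal minor $m_i^{(k)}$ as a linear function of $c$ by cofactor recurrences. This gives $m_i^{(k)}=i+1$ or $(i+1)-2(i-k+1)kc$ for $i<n$, and $m_n^{(k)}=4-8kc$ or $4-2nc$. It then applies Sylvester's criterion and takes the smallest root over all $i$ and $k$. An implicit continuity step passes from definiteness below that root to semidefiniteness at it.

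\textbf{Your route.} You use Cauchy--Schwarz in the inner product defined by the Cartan matrix $A$. This reduces everything to one number per node, $(A^{-1})_{kk}$, which you obtain by solving $Av=e_k$ explicitly. Your solution vectors and the resulting values $k$ and $n/4$ are right, and so are your final comparisons.

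\textbf{How the two connect.} The matrix determinant lemma gives $\det(A-2c\,e_ke_k^{T})=\det A\,\bigl(1-2c\,(A^{-1})_{kk}\bigr)$ with $\det A=4$. This is exactly the paper's formula for $m_n^{(k)}$. Your criterion also explains why only the top minor matters: the intermediate roots $\lambda_i^{(k)}$, $i<n$, which the paper has to compare, never bind.

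\textbf{What each approach buys.} Yours gives an exact \emph{if and only if} criterion. It handles the boundary value of $c$ without a limit argument. It also shows the constants are optimal: (1) is attained at $k=n-2$, and (2) at $k=n-3$. It would carry over verbatim to Lemma~\ref{Lem:quadformE} once you know the diagonal of the inverse $E_n$ Cartan matrix; for instance, it equals $6$, $12$, $30$ at the branch node. The paper's computation is more mechanical, but it is self-contained and does not require guessing the solution vectors.
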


\begin{proof}
    Let $c$ be a real number and let $M^{(k)}$ be the $n \times n$ symmetric matrix associated with the quadratic form $2 (q_{D_n} - c x_k^2)$.
    For $1 \leq i \leq n$, let $M^{(k)}_i$ be the $i \times i$ submatrix of $M^{(k)}$ consisting of the first $i$ rows and columns of $M^{(k)}$ and set $m^{(k)}_i := \det M^{(k)}_i$.
    Clearly we have $m_1^{(k)} = 2$ if $k \geq 2$ and $m_2^{(k)} = 3$ if $k \geq 3$.
    By the cofactor expansion of $M^{(k)}_i$ along the $i$th row, we obtain the following relations: 
    \begin{itemize}
        \item $m_i^{(k)} = 2 m_{i-1}^{(k)} - m^{(k)}_{i-2}$ for $i \neq k, n$.
        \item $m_k^{(k)} = (2-2c) m^{(k)}_{k-1} - m^{(k)}_{k-2}$ for $2 \leq k \leq n-1$.
        \item $m_n^{(k)} = 
        \begin{cases}
            2 m_{n-1}^{(k)} - 2 m_{n-3}^{(k)}, & \text{if $1 \leq k \leq n-2$}, \\
            %2 m_{n-1}^{(k)} - 2 m_{n-3}^{(n)}, & \text{if $k = n-2$}, \\
            2 m_{n-1}^{(n-1)} - (2-2c) m_{n-3}^{(n-1)}, & \text{if $k = n-1$}, \\
            (2 - 2c) m_{n-1}^{(n)} - 2 m_{n-3}^{(n)}, & \text{if $k = n$}.
        \end{cases}$
    \end{itemize}
    
    From the above relations, we obtain
    \[
    m^{(k)}_i =
    \begin{cases}
        i+1, & \text{if $1 \leq i \leq k-1$}, \\
        (i+1) - 2 (i-k+1) k c, & \text{if $k \leq i \leq n-1$},
    \end{cases}
    \]
    and
    \[
    m^{(k)}_n =
    \begin{cases}
    4 - 8 k c, & \text{if $1 \leq k \leq n-2$}, \\
    4 - 2 n c, & \text{if $k = n-1, n$}.
    \end{cases}
    \]
    
    Now we think of $c$ as a variable and we regard $m^{(k)}_i$ as a linear function on $c$. 
    We then define $\lambda^{(k)}_i$ to be the root of the linear function $m^{(k)}_i$ with respect to the variable $c$ (if $m^{(k)}_i$ does not involve the variable $c$, then we set $\lambda^{(k)}_i = \infty$).
    Then it is easy to see that
    \[
    \min \{\, \lambda^{(k)}_i \mid 1 \leq i, k \leq n\} = \lambda_n^{(n-2)} = \frac{1}{2(n-2)}
    \]
    and
    \[
    \min \{\, \lambda^{(k)}_i \mid 1 \leq i, k \leq n, k \neq n-2 \,\} = \lambda^{(n-3)}_n = \frac{1}{2(n-3)}.
    \]
    These prove the assertions (1) and (2).
\end{proof}

Similarly to Lemma \ref{Lem:quadformD}, we have a positive semidefinite for Du Val singularity of type-$E$.

\begin{lemma} \label{Lem:quadformE}
    Let $n$ and $k$ be integers such that $n\in \{6,7,8\}$ and $1 \leq k \leq n$.
    We set
    \[
    q_{E_n} := \sum_{i=1}^n x_i^2 - \sum_{i=1}^{n-2} x_i x_{i+1} - x_3 x_n.
    \] 
    Then the following assertions hold.
    \begin{enumerate}
        \item The quadratic form 
        \[
        q_{E_n} - \frac{9-n}{12(n-3)} x_k^2
        \] 
        is positive semidefinite for any $k$.
        \item The quadratic form
        \[
        q_{E_n} - \frac{9-n}{4(n-1)} x_k^2
        \]
        is positive semidefinite for any $k \ne 3$.
    \end{enumerate}
\end{lemma}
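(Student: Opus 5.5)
Our plan follows the proof of Lemma~\ref{Lem:quadformD}. Fix $k$ and a real parameter $c$, and let $M^{(k)}$ be the symmetric matrix of $2(q_{E_n} - c x_k^2)$. Its diagonal entries are $2$, except the $(k,k)$ entry, which is $2-2c$. Its off-diagonal entries are $-1$ for the chain pairs $(i,i+1)$ with $1 \le i \le n-2$ and for the branch pair $(3,n)$.

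For $c=0$, $M^{(k)}$ is the Cartan matrix of $E_n$, which is positive definite with determinant $9-n$. Perturbing one diagonal entry gives
\[
\det M^{(k)} = (9-n) - 2c\,\det C_{\hat k},
\]
where $C_{\hat k}$ is the Cartan matrix of the Dynkin diagram with vertex $k$ deleted. The principal minors of $M^{(k)}$ that avoid index $k$ do not depend on $c$ and are positive. The minors containing $k$ are affine in $c$ and decrease as $c$ grows. So $M^{(k)}$ stays positive semidefinite exactly for
\[
c \le \frac{9-n}{2\det C_{\hat k}} = \frac{1}{2}\,(C^{-1})_{kk}.
\]
One can see this via Sylvester's criterion, ordering the vertices so that $k$ comes last: all leading minors except the full one are then $c$-independent and positive. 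Alternatively, by Schur complement, $q - c x_k^2 \ge 0$ if and only if $c \le 1/(2(M^{-1})_{kk})$ for the unnormalized form. So the task reduces to computing the diagonal of the inverse Cartan matrix, or equivalently the determinants of the diagrams obtained by deleting each vertex.

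These determinants come from standard facts. Deleting a vertex splits $E_n$ into components of types $A_m$, $D_m$ and $E_m$, with determinants $m+1$, $4$ and $9-m$ respectively ($E_5=D_5$, $E_4=A_4$). Here vertex $3$ is the trivalent node, the arms are $x_1,x_2$ (length $2$), $x_4,\dots,x_{n-1}$ (length $n-4$) and $x_n$ (length $1$).
\begin{itemize}
\item Deleting vertex $3$ leaves $A_2\times A_{n-4}\times A_1$, with determinant $3\cdot(n-3)\cdot 2 = 6(n-3)$. The bound is $c \le \frac{9-n}{12(n-3)}$, which matches (1).
\item Deleting the end vertex $x_n$ leaves $A_{n-1}$, with determinant $n$.
\item Deleting $x_1$ leaves $D_{n-1}$, with determinant $4$.
\item Deleting $x_2$ leaves $A_1\times A_{n-2}$, with determinant $2(n-1)$.
\item Deleting $x_j$ for $4\le j\le n-1$ leaves $E_{j}\times A_{n-1-j}$, with $E_j$ meaning $A_4$ or $D_5$ for small $j$. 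The determinant is $(9-j)(n-j)$.
\end{itemize}

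It remains to check that $6(n-3)$ is the largest of these values, which gives (1), and that the largest among $k\ne 3$ equals $2(n-1)$, which gives (2). This is a finite check for $n=6,7,8$; for example, for $n=8$ the values are $30, 4, 14, 20, 18, 12, 6, 8$. Some care is needed about which deleted vertex attains the maximum among $k\ne3$; for $n=8$ the maximum is at vertex $4$, with value $20 > 14$, so the bookkeeping of the indexing must be confirmed.

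The main obstacle is exactly this bookkeeping. The Sylvester/Schur argument itself is routine. What needs care is matching the labelling of $q_{E_n}$ (the chain $x_1,\dots,x_{n-1}$ with $x_n$ attached to $x_3$) to the correct component types after deletion, and verifying which vertex attains the extremal value in (2). I would tabulate $(C^{-1})_{kk}$ for each $n$ directly, as a cross-check on the deletion determinants. If the table shows a different maximizer for $k\ne 3$, the constant in (2) must be read off from that table.
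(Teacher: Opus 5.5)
Your approach is essentially the paper's. The paper's recursion for the leading minors of $M^{(k)}$ ends in $m^{(k)}_n=(9-n)-2c\det C_{\hat k}$, with exactly your deletion determinants $4,\ 2(n-1),\ 6(n-3),\ (9-k)(n-k),\ n$. Putting $k$ last, or taking the Schur complement, is a cleaner way to see that only this one minor depends on $c$. Part (1) is proved correctly this way. There are two small slips:
\begin{enumerate}
\item The threshold is $\frac{9-n}{2\det C_{\hat k}}=\frac{1}{2(C^{-1})_{kk}}$, not $\frac12 (C^{-1})_{kk}$.
\item For $n=8$ and $k=1,\dots,8$ the values are $4,14,30,20,12,6,2,8$. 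There is no $18$: vertex $7$ gives $2$.
\end{enumerate}

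For (2), what you flagged is not an indexing ambiguity. The labelling in the statement is explicit and your computation is correct, so assertion (2) is \emph{false} for $n=7,8$.
\begin{enumerate}
\item Deleting $x_4$ leaves the chain $x_1,x_2,x_3,x_n$ (type $A_4$) and the chain $x_5,\dots,x_{n-1}$ (type $A_{n-5}$). Hence $\det C_{\hat 4}=5(n-4)$, which exceeds $2(n-1)$ for $n=7$ ($15>12$) as well as $n=8$ ($20>14$).
\item So the sharp constant at $k=4$ is $\frac{9-n}{10(n-4)}$, which is strictly less than $\frac{9-n}{4(n-1)}$ for $n=7,8$.
\item For $n=7$, $v=(6,12,18,15,10,5,9)$ satisfies $Cv=2e_4$. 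Hence $q_{E_7}(v)=15<\frac{225}{12}=\frac{1}{12}v_4^2$.
\item For $n=8$, $v=(8,16,24,20,15,10,5,12)$ satisfies $Cv=e_4$. Hence $q_{E_8}(v)=10<\frac{100}{7}=\frac{1}{28}v_4^2$.
\end{enumerate}
The paper's proof has exactly this oversight. It asserts $\min_{k\neq 3}\lambda^{(k)}_n=\lambda^{(2)}_n$, but its own formula $m^{(4)}_n=(9-n)-10(n-4)c$ gives $\lambda^{(4)}_n=\frac{9-n}{10(n-4)}<\lambda^{(2)}_n$ when $n=7,8$.

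So rather than hedging, you should state what your table actually proves. Part (2) holds for $n=6$, where $k=2$ and $k=4$ both give $\frac{3}{20}$. For $n=7,8$ it holds for $k\notin\{3,4\}$, or for all $k\neq 3$ with the constant $\frac{9-n}{10(n-4)}$. The failing case is the pair $\{i,j\}=\{3,4\}$ in case (iii) of Theorem~\ref{ineqDE}. With the corrected constant, the hypotheses of Lemma~\ref{Lem:minfunc} still hold, so that theorem survives; Theorem~\ref{ineqklt} does not use this lemma at all.
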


\begin{proof}
    Let $c$ be a real number and let $M^{(k)}$ be the symmetric $n \times n$ matrix associated with the quadratic form $2(q_{E_n} - c x_k^2)$. 
    For $1 \leq i \leq n$, we denote by $m^{(k)}_i$ the determinant of the $i$th submatric of $M$ consisting of the first $i$ rows and columns.
    We have the following relations: 
    \begin{itemize}
        \item $m^{(k)}_1 = 2$ if $k \geq 2$ and $m_2 = 3$ if $k \geq 3$.
        \item $m^{(k)}_i = 2 m^{(k)}_{i-1} - m^{(k)}_{i-2}$ for $i \neq k, n$;
        \item $m^{(k)}_k = 
        \begin{cases}
            2-2c, & \text{if $k = 1$}, \\
            3-4c, & \text{if $k = 2$}, \\
            (2-2c) m^{(k)}_{k-1} - m^{(k)}_{k-2} = k+1-2kc, & \text{if $3 \leq k \leq n-1$}.
        \end{cases}$
        \item $m_n^{(k)} = 
        \begin{cases}
            2 m^{(k)}_{n-1} - (n-3)(3-4c), & \text{if $k=1, 2$}, \\
            2 m^{(k)}_{n-1} - 3 (n-3), & \text{if $k = 3$}, \\
            2 m^{(k)}_{n-1} - 3 m^{(k-3)}_{n-4}, & \text{if $4 \leq k \leq n-1$}, \\
        \end{cases}$
    \end{itemize}
    %(\textcolor{red}{OKADA: I confirmed that your computation is correct! Thank you very much.}

    %\noindent{\color{blue}(CHEN: The equality $m_{n}^{(k)}= 2 m^{(k)}_{n-1} - 3 m^{(k-3)}_{n-4}$ seems correct. For example, when $n=6, k=4$, the matrix $M^{(4)}$ is $ \begin{bmatrix} 2 & -1 & 0 & 0 & 0 & 0 \\ -1 & 2 & -1 & 0 & 0 & 0 \\0 & -1 & 2 & -1 & 0 & -1 \\0 & 0 & -1 & 2-2c & -1 & 0 \\0 & 0 & 0 & -1 & 2 & 0 \\0 & 0 & -1 & 0 & 0 & 2 \end{bmatrix}$, we have $m_n^{(k)}=2 m^{(k)}_{n-1} - 3 m^{(k-3)}_{n-4}=2 m^{(4)}_{5} - 3 m^{(1)}_{2}=3 -20c$ while $2m_{n-1}^{(k)}-m_{n-2}^{(k-1)}=2m_5^{4}-m_4^3=7-20c$.)} 
    
    By the above relations, we have
    \[
    m^{(k)}_i =
    \begin{cases}
        i+1, & \text{for $1 \leq i \leq k-1$}, \\
        %k+1 - k c, & \text{for $i = k$}, \\
        %k+2 - 2 k c, & \text{for $i = k + 1$}, \\
        %k+3-3kc, & \text{for $i = k+2$}, \\
        (i+1) - 2(i-k+1)kc, & \text{for $k \leq i \leq n-1$}.
    \end{cases}
    \]
    By direct computation, we have
    \[
    m^{(k)}_n =
    \begin{cases}
        (9-n) - 8c, & \text{if $k = 1$}, \\
        (9-n) - 4 (n-1)c, & \text{if $k = 2$}, \\
        (9-n) - 12(n-3)c, & \text{if $k = 3$}, \\
        (9-n) - 2 (n-k)(9-k)c, & \text{if $4 \leq k \leq n-1$}, \\
        (9-n) - 2 n c, & \text{if $k = n$}.
    \end{cases}
    \]
    Now we think of $c$ as a variable and regard $m^{(k)}_i$ as a function on the variable $c$.
    We then define $\lambda^{(k)}_i$ to be the root of the linear function $m^{(k)}_i$ (if $m^{(k)}_i$ does not involve $c$, then we set $\lambda^{(k)}_i = \infty$).
    Then it is easy to see that
    \[
    \min \{\, \lambda^{(k)}_i \mid 1 \leq i, k \leq n\} = \lambda_n^{(3)} = \frac{9-n}{12(n-3)}
    \]
    and
    \[
    \min \{\, \lambda^{(k)}_i \mid 1 \leq i, k \leq n, k \neq 3 \,\} = \lambda^{(2)}_n = \frac{9-n}{4(n-1)}.
    \]
    These prove the assertions (1) and (2).
\end{proof}

We also need the following basic estimate.

\begin{lemma} \label{Lem:minfunc}
Let $\lambda, \mu > 0$ be positive numbers with \[ \lambda + 2 \sqrt{\lambda/\mu} < 2, \textup{ and } \mu<2.\] Let
    \[
    f (x, y) := 4 (1-x)(1-y) + \max \{ \lambda x^2, \mu y^2\}
    \]
    be a function in variables $x, y$.
    Then $f (x, y)$ has minimum value  $\lambda$ on the region $C:=\{(x,y)\in \mathbb{R}^2\ |\ 0\leq x,y\leq 1\}.$    
\end{lemma}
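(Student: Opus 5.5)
The plan is to show that $f \geq \lambda$ on $C$ and that the value $\lambda$ is attained. Attainment is immediate: $f(1,y) = \max\{\lambda, \mu y^2\}$, and this equals $\lambda$ for $y$ small, for instance $y = 0$. For the lower bound, set $s := \sqrt{\lambda/\mu} > 0$. The hypothesis $\lambda + 2s < 2$ gives $s < 1$ and, more importantly,
\[
4 - 4s - 2\lambda > 0 .
\]
This single inequality should be all that is needed; the hypothesis $\mu < 2$ does not seem to be required for this step. The square $C$ splits into two regions according to which term realises the maximum, namely $\lambda x^2 \geq \mu y^2$ (equivalently $y \leq s x$) and $\mu y^2 \geq \lambda x^2$ (equivalently $x \leq y/s$). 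I will bound $f - \lambda$ from below on each region by a one-variable function that factors.

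\emph{Region $y \leq s x$.} Here $f = 4(1-x)(1-y) + \lambda x^2$. Since $1 - x \geq 0$ and $1 - y \geq 1 - s x$, we get $f \geq 4(1-x)(1-sx) + \lambda x^2$, whatever the sign of $1 - sx$. Writing $\lambda x^2 - \lambda = -\lambda(1-x)(1+x)$, this gives
\[
f - \lambda \geq (1-x)\bigl(4 - \lambda - (4s + \lambda)x\bigr).
\]
The second factor is linear and decreasing in $x$, so on $[0,1]$ it is at least its value at $x = 1$, which is $4 - 4s - 2\lambda > 0$. Hence $f \geq \lambda$ on this region.

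\emph{Region $x \leq y/s$.} Here $f = 4(1-x)(1-y) + \mu y^2$. If $y \geq s$, then $f \geq \mu y^2 \geq \mu s^2 = \lambda$ directly. If $y < s$, then $1 - x \geq 1 - y/s > 0$ and $1 - y \geq 0$, so
\[
f \geq 4(1-y)\Bigl(1 - \frac{y}{s}\Bigr) + \mu y^2 .
\]
Subtracting $\lambda = \mu s^2$ and factoring out $s - y$ gives
\[
f - \lambda \geq (s-y)\Bigl(\frac{4(1-y)}{s} - \mu(s+y)\Bigr).
\]
The second factor is decreasing in $y$, so it is at least its value at $y = s$, which is $\frac{4(1-s)}{s} - \frac{2\lambda}{s} = \frac{4 - 4s - 2\lambda}{s} > 0$, using $\mu s = \lambda/s$. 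Hence $f \geq \lambda$ on this region as well.

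The main point needing care is choosing the right lower bounds for the factors $(1-x)$ and $(1-y)$ on each region, so that $f - \lambda$ factors with a linear cofactor whose sign is controlled exactly by $4 - 4s - 2\lambda > 0$. The two regions cover $C$, so $\min_C f = \lambda$.
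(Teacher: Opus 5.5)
Your proof is correct, and it takes a different route from the paper's. The paper argues by compactness. It splits $C$ into $R_1=\{y\ge\theta x\}$ and $R_2=\{y\le\theta x\}$, and rules out interior minima because $f_x=-4(1-y)$ on $R_1$ and $f_y=-4(1-x)$ on $R_2$ do not vanish there. It then checks the six boundary segments one at a time by one-variable monotonicity. The hypothesis $\lambda+2\theta<2$ is used on the diagonal $y=\theta x$. The hypothesis $\mu<2$ is used only on the edge $x=0$, and $\lambda<2$ only on the edge $y=0$.

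You use the same monotonicity (in $y$ on $R_2$, in $x$ on $R_1$) as an inequality rather than to exclude critical points. This pushes every point onto the diagonal, or, when $y\ge s$, gives $f\ge\mu y^2\ge\lambda$ at once, so the edges of the square never need separate treatment. Your factorization $f(x,sx)-\lambda=(1-x)\bigl(4-\lambda-(4s+\lambda)x\bigr)$ is the algebraic counterpart of the paper's derivative computation along the diagonal, and both rest on exactly $\lambda+2s<2$.

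What your route buys is brevity, no appeal to the existence of a minimizer, and the correct observation that $\mu<2$ is superfluous. For instance, on $x=0$ with $\mu\ge 2$ one has $f\ge 4-4/\mu\ge 2>\lambda$. The paper's route is more mechanical and needs that extra hypothesis, which is harmless in its application since there $\mu=1/(n-3)$ or $(9-n)/(2(n-1))$.
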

\begin{proof}
     Denote by $\theta = \sqrt{\lambda/\mu}$. Note that $2\sqrt{\frac{\lambda}{\mu}}=2\theta<\lambda+2\theta<2$. In particular, $\mu>\lambda$.
    Define the segments as follows: \begin{align*}
    L_1&:=\{(0,y)\in \mathbb{R}^2\ |\ 0\leq y\leq 1\},\\
    L_2&:=\{(x,1)\in \mathbb{R}^2\ |\ 0\leq x\leq 1\},\\
    L_3&:=\{(1,y)\in \mathbb{R}^2\ |\ \theta\leq y\leq 1\},\\
    L_4&:=\{(x,\theta x)\in \mathbb{R}^2\ |\ 0\leq x\leq 1\},\\
    L_5&:=\{(x,0)\in \mathbb{R}^2\ |\ 0\leq x\leq 1\},\\
    L_6&:=\{(1,y)\in \mathbb{R}^2\ |\ 0\leq y\leq \theta x\}.
    \end{align*}
    Let $R_1$ (resp. $R_2$) be the closed region enclosed by the edges $L_1, L_2, L_3$ and $L_4$ (resp. $L_4,L_5$, and $L_6$). It is easy to see 
    \[
    f(x,y)=\begin{dcases}
        4(1-x)(1-y)+\mu y^2 
         & \text{if $(x,y)\in R_1$}, \\
        4(1-x)(1-y)+\lambda x^2 & \text{if $(x,y)\in R_2$}.
    \end{dcases}
    \] Since $f(x,y)$ is a continuous function on the compact set $C=R_1\cup R_2$, there exists a point $Q=Q(x_1,y_1)\in C$ such that $f(Q)=\min_{P\in C} f(P)$ is the minimum value. If $Q$ is an interior point of $R_1$ (resp. $R_2$), then
    the partial differential $f_x=4(y-1)$ (resp. $f_y=4(x-1)$) vanishes at $Q(x_1,y_1)$ which is absurd. Thus, $Q\in L_i$ for some $i=1,2,3,4,5,6$. 

    In what follows, we shall show $f(Q)=\lambda$.
    Note that 
    \[
    f(x,y)=\begin{dcases}
         \mu>\lambda 
         & \text{if $(x,y)\in L_2$},\\
         \mu y^2\geq \lambda & \text{if $(x,y)\in L_3$}\\
         \lambda 
         & \text{if $(x,y)\in L_6$}.  
    \end{dcases}
    \]

    Suppose that $Q\in L_1$. As the restriction function $f|_{L_1}=f(0,y)$ has derivative $-4+2\mu y$ on $L_1$, by assumptions $\mu<2$, and $y_1\leq 1$, $Q$ must be one of the endpoints of $L_1$. So $f(Q)=f(0,1)=\mu>\lambda$ which is absurd.

    Suppose that $Q\in L_4$. As the function $f(x,y)|_{L_4}=f(x, \theta x)$ has derivative $2(\lambda+4\theta)x-4(1+\theta)$ on $L_4$, where $x\leq 1<\frac{2(1+\theta)}{\lambda + 4\theta}$ by the assumption,   $Q$ must be one of the endpoints of $L_4$. So $f(Q)=f(1,\theta)=\lambda$.
    
    Suppose that $Q\in L_5$. As the function $f(x,0)$ has derivative $-4+2\lambda x$ on $L_5$, by assumptions $\lambda<2$ and $x_1\leq 1$, $Q$ must be one of the endpoints of $L_5$. So $f(Q)=f(1,0)=\lambda$.
    The proof is complete. 
\end{proof}

\begin{lemma} \label{Lem:minfDE}
    Let $n \geq 4$ be an integer and let
    \[
    \begin{split}
        f_{D_n} (x, y) &:= 4 (1-x)(1-y) + \max \left\{\frac{1}{n-2} x^2, \frac{1}{n-3} y^2 \right\}, \\
        f_{E_n} (x, y) &:= 4 (1-x)(1-y) + \max \left\{\frac{9-n}{6(n-3)} x^2, \frac{9-n}{2(n-1)} y^2 \right\},
    \end{split}
    \]
    be functions in variables $x, y$.
    Then for $n \geq 4$ (resp.\ $n \in \{6, 7, 8\}$), we have 
    \[
    f_{D_n} (x, y) \geq \frac{1}{n-2} \quad \left(\text{resp.\ } f_{E_n} (x, y) \geq \frac{9-n}{6(n-3)} \right),
    \]
    for any $0 \leq x, y \leq 1$.
\end{lemma}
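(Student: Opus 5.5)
The plan is to deduce both inequalities directly from Lemma~\ref{Lem:minfunc}. The function $f_{D_n}$ (resp.\ $f_{E_n}$) is exactly the function $f$ of that lemma for
\[
\lambda = \frac{1}{n-2}, \quad \mu = \frac{1}{n-3} \qquad \left(\text{resp.\ } \lambda = \frac{9-n}{6(n-3)}, \quad \mu = \frac{9-n}{2(n-1)}\right).
\]
Lemma~\ref{Lem:minfunc} says that the minimum of $f$ on the square $0 \le x, y \le 1$ equals $\lambda$. This is precisely the claimed lower bound in each case. So all the work consists of verifying the two hypotheses $\mu < 2$ and $\lambda + 2\sqrt{\lambda/\mu} < 2$.

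\emph{Type $D_n$.} Since $n \ge 4$, we have $\mu = 1/(n-3) \le 1 < 2$. Put $t := 1/(n-2) \in (0, 1/2]$. Then $\lambda = t$ and $\lambda/\mu = (n-3)/(n-2) = 1 - t$, so the second hypothesis becomes
\[
t < 2\bigl(1 - \sqrt{1-t}\bigr).
\]
This follows from the elementary strict inequality $\sqrt{1-t} < 1 - t/2$ for $0 < t \le 1$, which holds because $(1 - t/2)^2 = 1 - t + t^2/4 > 1 - t$. Hence the lemma applies for every $n \ge 4$.

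\emph{Type $E_n$.} There are only three cases, and I would simply check them numerically.
\begin{itemize}
\item[] For $n = 6$: $\lambda = 1/6$, $\mu = 3/10$, and $\lambda/\mu = 5/9$, so $\lambda + 2\sqrt{\lambda/\mu} = 1/6 + 2\sqrt5/3 \approx 1.66 < 2$.
\item[] For $n = 7$: $\lambda = 1/12$, $\mu = 1/6$, and $\lambda/\mu = 1/2$, giving approximately $1.50 < 2$.
\item[] For $n = 8$: $\lambda = 1/30$, $\mu = 1/14$, and $\lambda/\mu = 7/15$, giving approximately $1.40 < 2$.
\end{itemize}
In all three cases $\mu < 1 < 2$. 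To make the irrational comparisons rigorous, I would square them; for example, for $n = 6$ the inequality is equivalent to $2\sqrt5/3 < 11/6$, i.e.\ $80 < 121$.

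Applying Lemma~\ref{Lem:minfunc} then gives
\[
f_{D_n}(x,y) \ge \frac{1}{n-2} \quad \text{and} \quad f_{E_n}(x,y) \ge \frac{9-n}{6(n-3)} \qquad \text{on } [0,1]^2.
\]
The only step requiring any care is the uniform-in-$n$ verification in the $D_n$ case, which the substitution $t = 1/(n-2)$ reduces to the one-line square-root estimate above. I do not expect a genuine obstacle.
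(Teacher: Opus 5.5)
Your proposal is correct and follows the paper's proof. Both specialize Lemma~\ref{Lem:minfunc} to the same values of $\lambda,\mu$ and check the hypotheses $\mu<2$ and $\lambda+2\sqrt{\lambda/\mu}<2$. The paper leaves that check as ``straightforward'', while you carry it out explicitly: via $\sqrt{1-t}<1-t/2$ in the $D_n$ case, and by squaring in the three $E_n$ cases.
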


\begin{proof}
    We set $\lambda = 1/(n-2)$, $\mu = 1/(n-3)$ and $\theta = \sqrt{(n-3)/(n-2)}$.
    It is straightforward to check that the inequalities
    \begin{equation} \label{eq:funct}
        \lambda + 2 \theta < 2 \text{ and } \mu < 2
    \end{equation}
    for any $n \geq 4$.
    By Lemma~\ref{Lem:minfunc}, we have the result for $f_{D_n} (x, y)$.
    Similarly, by setting $\lambda = (9-n)/(6(n-3))$, $\mu = (9-n)/(2(n-1))$ and $\theta = \sqrt{(n-1)/(3(n-3))}$, it is straightforward to check that the inequalities \eqref{eq:funct} hold for $n \in \{6, 7, 8\}$.
    Thus the assertion for $f_{E_n} (x, y)$ also follows from Lemma~\ref{Lem:minfunc}.
\end{proof}

%\textcolor{blue}{CHEN: It seems possible that one can give the similar inequalities for non-Gorenstein case by using the same method as in Theorems \ref{ineqDE} and \ref{3foldlocalineq}. However, it is harder to find the applications.}
\begin{theorem} \label{ineqDE}
    Let $P \in S$ be a germ of Du Val singularity of type $D_n, E_6, E_7$ or $E_8$, where $n \geq 4$.
    Let $\mathcal{L}$ be a mobile linear system on $S$ and let $\mu$ be a positive rational number such that the pair $(S, \frac{1}{\mu} \mathcal{L})$ is not log canonical.
    Then,
    \[
    \mathcal{L}^2 > \begin{dcases}
        \frac{1}{n-2} \mu^2, & \text{if $P \in S$ is of type $D_n$}, \\
        \frac{1}{6} \mu^2, & \text{if $P \in S$ is of type $E_6$}, \\
        \frac{1}{12} \mu^2, & \text{if $P \in S$ is of type $E_7$}, \\
        \frac{1}{30} \mu^2, & \text{if $P \in S$ is of type $E_8$}.
    \end{dcases}
    \]
\end{theorem}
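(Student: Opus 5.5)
The plan is to pass to the minimal resolution $\pi\colon T\to S$ and combine Corti's inequality on the smooth surface $T$ with the quadratic-form estimates of Lemmas~\ref{Lem:quadformD} and~\ref{Lem:quadformE}. The last step is the elementary minimization of Lemma~\ref{Lem:minfDE}. (Theorem~\ref{ineqklt} already gives the result, but the point of the Appendix is a direct argument.)

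\textbf{Setup on $T$.} Let $E_1,\dots,E_n$ be the $(-2)$-curves of $\pi$, numbered so that the intersection matrix is $-M$, where $M$ is the matrix of $2q_{D_n}$ or $2q_{E_n}$. In this numbering, for $D_n$ the curve $E_{n-2}$ is the fork, and for $E_n$ the fork is $E_3$. Write $\pi^*\mathcal{L}=\mathcal{L}_T+\sum a_iE_i$, where $\mathcal{L}_T$ is the (mobile) proper transform and $a_i\ge 0$.

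\textbf{Decomposition of $\mathcal{L}^2$.} Since $\pi^*\mathcal{L}\cdot E_i=0$, we get
\[
\mathcal{L}^2=(\pi^*\mathcal{L})^2=\mathcal{L}_T^2-\Bigl(\sum a_iE_i\Bigr)^2=\mathcal{L}_T^2+2q(a),
\]
where $\mathcal{L}_T^2$ is the sum of the local intersection numbers at points of $\pi^{-1}(P)$. Because $\pi$ is crepant, the pair
\[
\Bigl(T,\tfrac1\mu\mathcal{L}_T+\textstyle\sum \tfrac{a_i}{\mu}E_i\Bigr)
\]
is not log canonical at some point $Q\in\pi^{-1}(P)$. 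We may therefore bound $\mathcal{L}_T^2$ from below by the local intersection number at $Q$ alone.

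\textbf{Case analysis at $Q$.} Set $x_i=a_i/\mu$.

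\emph{Some relevant coefficient exceeds $1$.} If $x_k>1$ for a curve $E_k$ through $Q$, we simply discard $\mathcal{L}_T^2\ge0$. Part (1) of the quadratic-form lemmas gives
\[
2q(a)\ge \tfrac{1}{n-2}a_k^2>\tfrac1{n-2}\mu^2,
\]
and in the $E_n$ case the constant is $2\cdot\frac{9-n}{12(n-3)}=\frac{9-n}{6(n-3)}$. This constant equals $\frac16,\frac1{12},\frac1{30}$ for $n=6,7,8$, which matches the theorem.

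\emph{All coefficients at most $1$.} Then Corti's inequality \cite[Theorem~3.1]{Corti}, applied to the smooth germ $Q\in T$ with boundary $x_kE_k+x_jE_j$, gives
\[
(\mathcal{L}_T^2)_Q>4(1-x_k)(1-x_j)\mu^2.
\]
Here $E_k,E_j$ are the (at most two) exceptional curves through $Q$; if only one curve passes through $Q$, put $x_j=0$.

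\emph{Bounding $2q(a)$ at two adjacent curves.} When two curves meet at $Q$, they are adjacent in the Dynkin diagram, so at most one of them is the fork. Call the non-fork one $E_j$. Part (1) applied to $k$ and part (2) applied to $j$ give
\[
2q(a)\ge\mu^2\max\Bigl\{\tfrac{1}{n-2}x_k^2,\ \tfrac{1}{n-3}x_j^2\Bigr\}
\]
for $D_n$. Analogously, for $E_n$ the two constants are $\frac{9-n}{6(n-3)}$ and $\frac{9-n}{2(n-1)}$.

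\emph{Conclusion.} Combining the two bounds,
\[
\mathcal{L}^2>\mu^2 f_{D_n}(x_k,x_j)\quad(\text{resp. } \mu^2 f_{E_n}(x_k,x_j)),
\]
and Lemma~\ref{Lem:minfDE} yields the claimed bound for $0\le x_k,x_j\le1$. The single-curve case is the specialization $y=0$, which is covered by the same lemma.

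\textbf{Main obstacle.} The delicate step is making sure the quadratic-form bounds are applied to the right variables. The weaker constant of part (2) is only available away from the fork, so I must check that at a node $Q=E_k\cap E_j$ the ordering of $(x,y)$ in $f$ can always be chosen with the fork variable in the $x$-slot carrying the smaller constant $\lambda$. I also need that replacing $2q(a)$ by the maximum of the two one-variable bounds is legitimate: each bound holds separately, so their maximum does too. Finally, the reduction from the global $\mathcal{L}_T^2$ to the local number at $Q$ relies on all local intersection numbers being nonnegative, which holds since $\mathcal{L}_T$ is mobile.
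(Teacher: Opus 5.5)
Your argument is the paper's own proof of Theorem~\ref{ineqDE}. It uses the same identity $\mathcal{L}^2=\mathcal{L}_T^2+2q(a)$ on the minimal resolution and the same case split according to whether some $a_i/\mu$ exceeds $1$. It then applies Corti's inequality at the non-log-canonical point $Q$, bounds $2q(a)$ by $\max\{\lambda a_k^2,\mu' a_j^2\}$ with the fork in the $\lambda$-slot, and closes with Lemma~\ref{Lem:minfDE}. Folding the one-curve case into $y=0$ is only a cosmetic difference from the paper's separate case (ii).

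There is one real flaw, which you inherit from the paper rather than introduce: part (2) of Lemma~\ref{Lem:quadformE} is false for $E_7$ and $E_8$.

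\emph{Why it fails.} The sharp bound is $2q(a)\ge a_k^2/(M^{-1})_{kk}$. For $k=4$, the neighbour of the fork $E_3$ on the long arm, one has $(M^{-1})_{44}=15/2$ for $E_7$ and $(M^{-1})_{44}=20$ for $E_8$. The paper's own formula $m^{(k)}_n=(9-n)-2(n-k)(9-k)c$ confirms this: its root at $k=4$ is $1/15$ (resp.\ $1/40$), smaller than the claimed minimum $\lambda^{(2)}_n=1/12$ (resp.\ $1/28$). Concretely, for $E_7$ take $a=M^{-1}e_4=(3,6,9,\tfrac{15}{2},5,\tfrac52,\tfrac92)$. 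Then $2q(a)=a_4=\tfrac{15}{2}$, whereas $\tfrac16 a_4^2=\tfrac{225}{24}$. So your step ``$2q(a)\ge \frac{9-n}{2(n-1)}a_j^2$'' is false when $Q\in E_3\cap E_4$ and $j=4$.

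\emph{The repair is easy.}
For $k\ne 3$ the correct bound is $2q(a)\ge \frac{9-n}{5(n-4)}a_k^2$. This gives $\mu'=\tfrac{3}{10},\tfrac{2}{15},\tfrac{1}{20}$ for $n=6,7,8$, so $E_6$ is unaffected. With $\lambda=\frac{9-n}{6(n-3)}$ one still has $\mu'<2$ and $\lambda+2\sqrt{\lambda/\mu'}<1.67<2$. Hence Lemma~\ref{Lem:minfunc} still gives the minimum $\lambda$, and your conclusion stands.

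The $D_n$ constants are correct as stated, since $(M^{-1})_{kk}\le n-3$ for every $k\ne n-2$.
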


%\begin{theorem} \label{ineqE}
    %Let $P \in S$ be a germ of Du Val singularity of type $E_n$, where $n = 6, 7, 8$.
    %Let $\mathcal{L}$ be a mobile linear system on $S$ and let $\mu$ be a positive rational number such that the pair $(S, \frac{1}{\mu} \mathcal{L})$ is not log canonical.
    %Then, \[ \mathcal{L}^2 \geq 
    %\begin{dcases}
     %   \frac{1}{6} \mu^2, & \text{if $n = 6$}, \\
     %   \frac{1}{12} \mu^2, & \text{if $n = 7$}, \\
     %   \frac{1}{30} \mu^2, & \text{if $n = 8$}.
    %\end{dcases}
    %\]
%\end{theorem}

%\noindent\textcolor{blue}{CHEN: Would it be better if we combine Theorems \ref{ineqD} and \ref{ineqE} into a Theorem as in Theorem \ref{3foldlocalineq}? }\textcolor{red}{OKADA:Yes! Please combine them.}

\begin{proof}[Proof of Theorem ~\ref{ineqDE}]
    Let $P \in S$ be a germ of Du Val singularity of type $D_n$ (resp.\ $E_n$), where $n \geq 4$ (resp.\ $n \in \{6, 7, 8\}$), and let $\pi \colon T \to S$ be the minimal resolution.
    There are $n$ prime exceptional divisors whose configuration is explained in their dual graphs:
        %%%%%%%%%%%%%%%%%%%%
    \iffalse
    \[
    \begin{split}
        D_n:& \quad \dynkin[radius=7mm, edgelength=0.7cm, labels={E_1,E_2,E_{n-1},E_{n-2},E_{n-1},E_n}, label directions={, , ,right, , }, scale=1.8]{D}{} \\
        E_n:& \quad \dynkin[radius=7mm, edgelength=0.7cm, make indefinite edge={5-6}, labels={E_1, E_n, E_2, E_3, E_4, E_{n-1}}, scale=1.8]{E}{6}
    \end{split}
    \]
    \fi
    %%%%%%%%%%%%%%%%%%%% 
    \begin{center}
% --- D_n Diagram ---
\begin{tikzpicture}[scale=1, every node/.style={circle, fill=black, inner sep=2pt}]
  \node[label=left:{$D_n : \quad$}, draw=none, fill=none] at (-0.5, 0) {};
  
  \node[label=below:{$E_1$}] (D1) at (0, 0) {};
  \node[label=below:{$E_2$}] (D2) at (1.2, 0) {};
  \node[label=below:{$E_{n-1}$}] (Dn1) at (3.2, 0) {};
  \node[label=right:{$E_{n-2}$}] (Dn2) at (4.4, 0) {};
  \node[label=right:{$E_{n-1}$}] (Dtop) at (5.1, 1.1) {};
  \node[label=right:{$E_n$}] (Dbot) at (5.1, -1.1) {};

  \draw (D1) -- (D2);
  \draw (D2) -- (1.8, 0);
  \draw[dotted] (1.8, 0) -- (2.6, 0);
  \draw (2.6, 0) -- (Dn1);
  \draw (Dn1) -- (Dn2);
  \draw (Dn2) -- (Dtop);
  \draw (Dn2) -- (Dbot);
\end{tikzpicture}

% --- E_n Diagram ---
\begin{tikzpicture}[scale=1, every node/.style={circle, fill=black, inner sep=2pt}]
  \node[label=left:{$E_n : \quad$}, draw=none, fill=none] at (-0.5, 0) {};

  \node[label=below:{$E_1$}] (E1) at (0, 0) {};
  \node[label=below:{$E_2$}] (E2) at (1.2, 0) {};
  \node[label=below:{$E_3$}] (E3) at (2.4, 0) {};
  \node[label=below:{$E_4$}] (E4) at (3.6, 0) {};
  \node[label=below:{$E_{n-1}$}] (En1) at (5.6, 0) {};
  \node[label=right:{$E_n$}] (Etop) at (2.4, 1.1) {};

  \draw (E1) -- (E2);
  \draw (E2) -- (E3);
  \draw (E3) -- (E4);
  \draw (E4) -- (4.2, 0);
  \draw[dotted] (4.2, 0) -- (5.0, 0);
  \draw (5.0, 0) -- (En1);
  \draw (E3) -- (Etop);
\end{tikzpicture}
\end{center}

    Note that $E_i^2 = -2$ for any $i$ and $E_i \cdot E_j = 1$ for any $i \neq j$ such that the vertices corresponding to $E_i$ and $E_j$ are connected by an edge.
    Let $\mathcal{L}$ be a mobile linear system on $S$ and write $\pi^*\mathcal{L} = \tilde{\mathcal{L}} + \sum a_i E_i$, where $\tilde{\mathcal{L}} := \pi_*^{-1} \mathcal{L}$ is the proper transform of $\mathcal{L}$.
    We set
    \[
    q (a_1, \dots, a_n) := 
    \begin{cases}
        q_{D_n} (a_1, \dots, a_n), & \text{if $P \in S$ is of type $D_n$}, \\
        q_{E_n} (a_1, \dots, a_n), & \text{if $P \in S$ is of type $E_n$},
    \end{cases}
    \]
    where $q_{D_n}$ and $q_{E_n}$ are the quadratic forms given in Lemmas~\ref{Lem:quadformD} and \ref{Lem:quadformE}, respectively.
    Then, we have
    \[
    \tilde{\mathcal{L}}^2 = \left(\pi^*\mathcal{L} - \sum_{i=1}^n a_i E_i\right)^2 
    = \mathcal{L}^2 - 2 q (a_1, \dots, a_n).
    \]
    We set
    \[
    r := 
    \begin{dcases}
        \frac{1}{n-2}, & \text{if $P \in S$ is of type $D_n$}, \\
        \frac{9-n}{6(n-3)}, & \text{if $P \in S$ is of type $E_n$}.
    \end{dcases}
    \]
    The aim is to show that $\mathcal{L}^2 > r \mu^2$.
    
    We have
    \[
    K_T + \frac{1}{\mu} \tilde{\mathcal{L}} + \sum_{i=1}^n \frac{a_i}{\mu} E_i = \pi^*\left(K_S + \frac{1}{\mu} \mathcal{L} \right).
    \]
    By the assumption that the pair $(S, \frac{1}{\mu} \mathcal{L})$ is not log canonical, one of the following takes place:
    \begin{enumerate}
        \item[(i)] $a_i/\mu > 1$ for some $i$; 
        \item[(ii)] The pair $(T, \frac{1}{\mu} \tilde{\mathcal{L}} + \sum \frac{a_i}{\mu} E_i)$ is not log canonical at a point $\tilde{P} \in E_i$, and $\tilde{P} \notin E_j$ for any $j \neq i$;
        \item[(iii)] The pair $(T, \frac{1}{\mu} \tilde{\mathcal{L}} + \sum \frac{a_i}{\mu} E_i)$ is not log canonical at a point $\tilde{P} \in E_i \cap E_j$ for some $i \neq j$.
    \end{enumerate}
    
    Suppose that we are in case (i).
    Then, by Lemmas~\ref{Lem:quadformD} and \ref{Lem:quadformE}, we have 
    \[
    \mathcal{L}^2 = \tilde{\mathcal{L}}^2 + 2 q (a_1, \dots, a_n) \geq \tilde{\mathcal{L}}^2 + r a_i^2 > r \mu^2
    \]
    since $\tilde{\mathcal{L}}^2 \geq 0$ and $a_i/\mu > 1$.
    
    Suppose that we are in Case (ii).
    We may assume that $a_i/\mu \leq 1$ for any $i$ because otherwise we are in Case (i).
    By the Corti's inequality \cite[Theorem~3.1]{Corti}, we have
    \[
    \tilde{\mathcal{L}}^2 > 4 \left(1 - \frac{a_i}{\mu}\right) \mu^2.
    \]
    We write $a_i = \alpha \mu$, where $0 \leq \alpha \leq 1$.
    Then,
    \[
    \begin{split}
        \mathcal{L}^2 &\geq \tilde{\mathcal{L}}^2 + r a_i^2 > 4 \left(1 - \frac{a_i}{\mu}\right) \mu^2 + r a_i^2 \\
        &=4(1-\alpha)\mu^2+r\alpha^2\mu^2=(\alpha-1)(r\alpha+r-4)\mu^2+r\mu^2 \\
        &\geq r \mu^2, 
    \end{split}
    \] since $\alpha\leq 1$ and $r\leq 2$.
    
    Suppose that we are in Case (iii), that is, the pair $(T, \frac{1}{\mu} \tilde{\mathcal{L}} + \sum \frac{a_i}{\mu} E_i)$ is not log canonical at $P \in E_i \cap E_j$.
    We may assume that $a_k/\mu \leq 1$ for any $k$.
    By the Corti's inequality \cite[Theorem~3.1]{Corti}, we have
    \begin{equation} \label{eq:pfsurf}
        \tilde{\mathcal{L}}^2 > 4\left(1-\frac{a_i}{\mu}\right)\left(1-\frac{a_j}{\mu}\right) \mu^2.
    \end{equation}
    We set $f (x, y) := f_{D_n} (x, y)$ (resp.\ $f_{E_n} (x, y)$) if $P \in S$ is of type $D_n$ (resp.\ $E_n$), where $f_{E_n} (x, y)$ and $f_{D_n} (x, y)$ are functions as in Lemma~\ref{Lem:minfDE}.
    By the inequality\eqref{eq:pfsurf} and by Lemmas~\ref{Lem:quadformD}, \ref{Lem:quadformE} and \ref{Lem:minfDE}, we have
    \[
    \begin{split}
        \mathcal{L}^2 &\geq \tilde{\mathcal{L}}^2 +
        \begin{dcases}
            \max \left\{ \frac{1}{n-2} a_i^2, \frac{1}{n-3} a_j^2 \right\}, & \text{if $P \in S$ if of type $D_n$}, \\
            \max \left\{ \frac{9-n}{6(n-3)} a_i^2, \frac{9-n}{2(n-1)} a_j^2 \right\}, & \text{if $P \in S$ is of type $E_n$}.
        \end{dcases} \\
        &> f (a_i/\mu, a_j/\mu) \mu^2 \\
        &\geq r \mu^2.
    \end{split}
    \]
    This proves the assertions.
\end{proof}

\begin{remark}
    We can show that case (i) always takes place in the proof of Theorem~\ref{ineqDE}.
    The following argument is suggested by Ivan Cheltsov, which makes use of the fact that a Du Val singularity of type $D_n$ (for $n \ge 4$) and $E_n$ (for $n = 6, 7, 8$) is \textit{weakly-exceptional} (\cite[Example~4.7]{Pro00}), that is, it admits a unique \textit{plt blowup} (see for example \cite[Definitions~1.1 and 1.6]{Kud} for definitions of these notions).
    The argument is more or less the same as \cite[Lemma~2.5]{CK}. For readers convenience, we reproduce it as follows.

    Let $P \in S$, $\mathcal{L}$ and $\mu > 0$ be as in Theorem~\ref{ineqDE} and let $\pi \colon T \to S$ be the minimal resolution with exceptional divisors $E_1, \dots, E_n$ described as above.  
%    Let $m$ be a positive integer and let $\mathcal{L}^{(m)}$ be the movable linear system corresponding to $\operatorname{Sym}^m (H^0 (S, \mathcal{L}))$.
%    We have $(\mathcal{L}^{(m)})^2 = m^2 \mathcal{L}^2$ and, for any positive rational number $c$, the pair $(S, c \mathcal{L})$ is log canonical if and only if the pair $(S, \frac{c}{m} \mathcal{L}^{(m)})$ is log canonical.
%    By replacing $\mathcal{L}$ with $\mathcal{L}^{(m)}$ and $\mu$ with $m\mu$ for a sufficiently large $m > 0$, we may assume that $\mu > 1$.
    Let $c > 0$ be the log canonical threshold of the pair $(S, \mathcal{L})$, that is, the pair $(S, c \mathcal{L})$ is log canonical but not klt.
    Note that $c < 1/\mu$.
    Take a general member $D \in \mathcal{L}$ so that the pair $(S, c D)$ is log canonical but not klt.
    Let $\pi' \colon S' \to S$ be the partial resolution of singularities that contracts exactly one curve corresponding to the ``central" vertex of the Dynkin diagram of the minimal resolution $\pi$.
    Explicitly, $E'$ is the curve $E_{n-2}$ (resp.\ $E_3$) if $P \in S$ is of type $D_n$ (resp.\ of type $E_n$).
    Write 
    \[
    K_{S'} + c D' + c \gamma E' = {\pi'}^*(K_S + c D),
    \]
    where $\gamma \coloneq \ord_{E'} {\pi'}^*D$.
    We claim that $c \gamma = 1$.
    Assume not. 
    Then we have $c \gamma < 1$ and the pair $(S', c D' + E')$ is not log canonical since the pair $(S', c D' + c \gamma E')$ is log canonical but not klt.
    Hence the pair $(E', \Delta')$, where $\Delta' \coloneq cD'|_{E'} + \operatorname{Diff}_{E'} (0)$, is not log canonical.
    Moreover, we have $-(K_{E'} + \Delta') \sim_{\mathbb{Q}} - (1-c \gamma) E'|_{E'}$ is ample since $-E'$ is ample over $S$.
    By \cite[Theorem~2.1]{Kud}, this implies that the singularity $P \in S$ is not weakly-exceptional, which is a contradiction. 
    The claim is proved and we have $c \gamma = 1$.
    Let $E_i$ be the exceptional divisor corresponding to $E'$, that is, $i = n-2$ (resp.\ $i = 3$) if $P \in S$ is pf type $D_n$ (resp.\ of type $E_n$).
    Then $a_i = \gamma$ and we have $a_i/\mu = \gamma/\mu = 1/(c \mu) > 1$.
    Thus, Case (i) always takes place. Note that this gives an alternate proof without using Corti's inequality \cite[Theorem~3.1]{Corti}, Lemmas \ref{Lem:minfunc} and \ref{Lem:minfDE}.
\end{remark}

\end{document}